\crefname{hypothesis}{Hypothesis}{Hypotheses}
\title{Analysis of adaptive two-grid finite element algorithms for linear and nonlinear problems}
\author{
Yukun Li\thanks{Department of Mathematics, University of Central Florida,
Orlando ({\tt yukun.li@ucf.edu}).}
\and Yi Zhang\thanks{Department of Mathematics and Statistics, The University of North Carolina at Greensboro, Greensboro ({\tt y\_zhang7@uncg.edu}).}
}
\def\O{\Omega}
\def\H{H_K}
\def\uc{u_{H_K^{k+1}}}
\def\uk{u_{H_K^{k}}}
\def\up{u_{H_K^{k-1}}}
\def\u0{u_{H_K^0}}
\def\W1inf{W^{1,\infty}}
\def\e2{\tilde{\eta}}
\def\T{\mathcal{T}}
\def\E{\mathcal{E}}
\def\R2K{\tilde{R}_K}
\def\J2E{\tilde{J}_E}
\def\[{[\![}
\def\]{]\!]}
\begin{document}

\maketitle


\begin{abstract}
This paper proposes some efficient and accurate adaptive two-grid (ATG) finite element algorithms for linear and nonlinear partial differential equations (PDEs). The main idea of these algorithms is to utilize the solutions on the $k$-th level adaptive meshes to find the solutions on the $(k+1)$-th level adaptive meshes which are constructed by performing adaptive element bisections on the $k$-th level adaptive meshes. These algorithms transform non-symmetric positive definite (non-SPD) PDEs (resp., nonlinear PDEs) into symmetric positive definite (SPD) PDEs (resp., linear PDEs). The proposed algorithms are both accurate and efficient due to the following advantages: they do not need to solve the non-symmetric or nonlinear systems; the degrees of freedom (d.o.f.) are very small; they are easily implemented; the interpolation errors are very small. Next, this paper constructs residue-type {\em a posteriori} error estimators, which are shown to be reliable and efficient. The key ingredient in proving the efficiency is to establish an upper bound of the oscillation terms, which may not be higher-order terms (h.o.t.) due to the low regularity of the numerical solution. Furthermore, the convergence of the algorithms is proved when bisection is used for the mesh refinements. Finally, numerical experiments are provided to verify the accuracy and efficiency of the ATG finite element algorithms, compared to regular adaptive finite element algorithms and two-grid finite element algorithms \cite{xu1996two}.
\end{abstract}

\begin{keywords}
adaptive algorithms, two-grid finite element algorithms, residual-type {\em a posteriori} error estimators, reliability, efficiency, convergence
\end{keywords}

\begin{AMS}
65N12, 
65N15, 
65N22, 
65N30, 
65M50,  
65N55  
\end{AMS}

\section{Introduction}\label{sec-1}
The two-grid discretization techniques for solving second order non-SPD linear PDEs and nonlinear PDEs were proposed by Xu \cite{xu1994novel, marion1995error, xu1996two}. In these algorithms, two spaces $\mathcal{V}_h$ and $\mathcal{V}_H$ are employed for the finite element discretization, with mesh size $h\ll H$. The idea of these algorithms is to first solve the original non-SPD linear PDE or nonlinear PDE on the coarser finite element space $\mathcal{V}_H$, and then find the solution $u_h$ of a linearized PDE on the finer finite element space $\mathcal{V}_h$ based on the coarser level solution $u_H$. The computational cost is significantly reduced since $\mathrm{dim}(\mathcal{V}_H)\ll\mathrm{dim}(\mathcal{V}_h)$, and the optimal accuracy can be maintained by choosing an appropriate coarser mesh size $H$. These two-grid techniques have been extended to solve mixed (Navier-)Stokes$\slash$Darcy model \cite{cai2009numerical, mu2007two}, time-harmonic Maxwell equations \cite{zhong2013two}, eigenvalue problems \cite{xu2001two}, etc.

The idea of adaptive methods is to homogenize the errors on all mesh elements, and to further improve the accuracy and efficiency of solving PDEs. The upper and lower bounds of the error estimator for general elliptic and parabolic PDEs were derived in \cite{verfurth1996review, verfurth2013posteriori} when the conforming finite element methods were used for the discretization. The convergence of the method and the bound of the convergence rate were studied in \cite{dorfler1996convergent}; however, there were some stringent restrictions on the initial mesh. 
The concept of data oscillation was introduced in \cite{morin2000data, morin2002convergence} to circumvent the requirements on the initial mesh. In \cite{binev2004adaptive}, a modification of the algorithm in \cite{morin2002convergence} was proposed and optimal estimates were proved by incorporating coarsening of the meshes. The convergence rate of the conforming finite element methods was also analyzed in \cite{cascon2008quasi}. For adaptive discontinuous Galerkin methods, the construction of different types of error estimators were introduced in \cite{karakashian2003posteriori}, and the convergence was investigated in \cite{karakashian2007convergence}. 

The residual-based {\em a posteriori} error estimates of two-grid finite element methods for nonlinear PDEs was proposed in \cite{bi2018posteriori}, where both coarser and finer meshes were quasi-uniform. The two-grid $hp$-version discontinuous Galerkin finite element methods were proposed for the second-order quasilinear elliptic PDEs of monotone type in \cite{congreve2013two}. We also refer the readers to references \cite{congreve2014, congreve2013two_2, congreve2014two, congreve2019two, congreve2013hp} for applying the two-grid $hp$-version discontinuous Galerkin finite element methods to other PDEs. In this paper, we propose some ATG finite element algorithms by using adaptive meshes as both the $k$-th level coarser meshes and the $(k+1)$-th level finer meshes. Based on the solutions on the $k$-th level coarser adaptive meshes, we only need to solve linear problems on the $(k+1)$-th level finer adaptive meshes. Specifically, there are three main objectives in this paper. First, we propose some ATG finite element algorithms, which incorporate the advantages of both two-grid finite element algorithms and adaptive finite element algorithms, i.e., no need to solve non-SPD/nonlinear PDEs and fewer degrees of freedom. Second, we design the {\emph a posteriori} error estimators and show that they are reliable, i.e., the error estimators provide upper bounds for the errors up to higher-order terms. Third, we prove the efficiency of the proposed estimators, and then establish the convergence of the ATG finite element algorithms. The main challenge in establishing these results arises from the low regularity of the numerical solutions so that the oscillation terms appeared in the {\emph a posteriori} error analysis may not be higher-order terms. To overcome this difficulty, we derive an upper bound for the oscillation terms (see Lemma \ref{lem20150623_11}). 

The organization of this paper is as follows. In Section \ref{sec-2}, some notations are introduced, and some preliminary results are stated. In Section \ref{sec-3}, the ATG finite element algorithm for non-SPD linear problems is considered. 
In Section \ref{sec-4}, several ATG finite element algorithms are proposed for the nonlinear problems. The reliability, the efficiency, and the convergence are proved. In Section \ref{sec-6}, some numerical tests are provided to verify the accuracy and efficiency of the proposed algorithms.

\section{Preliminaries}\label{sec-2} Through this paper, denote $\Omega$ as a convex polygonal domain in $\mathbb{R}^2$, and the standard Sobolev notations are used, i.e., for any set $B$,
\begin{alignat*}{2}
\|v\|_{L^p(B)}&=\bigg(\int_{B}|v|^pdx\bigg)^{1\slash p}\qquad &&1\le p<\infty,\\
|v|_{W^{m,p}(B)}&=\bigg(\sum_{|\alpha|=m}\|D^{\alpha}v\|_{L^p(B)}^p\bigg)^{1\slash p}\qquad &&1\le p<\infty,\notag\\
\|v\|_{W^{m,p}(B)}&=\bigg(\sum_{|\alpha|\le m}|D^{\alpha}v|_{L^p(B)}^p\bigg)^{1\slash p}\qquad  &&1\le p<\infty.
\end{alignat*}
When $m=0$, denote $W^{0,p}(B)$ by $L^p(B)$, and when $p=2$, denote $W^{m,2}(B)$ by $H^m(B)$. Also, define $H^1_0(\Omega)$ by $H^1_0(\Omega) := \{v\in H^1(\Omega): v|_{\partial\Omega}=0\}.$

We will consider two types of PDEs in this paper. The first type is non-SPD linear PDEs, and the second type is nonlinear PDEs. First, consider the non-SPD linear PDE with homogeneous Dirichlet boundary condition
\begin{alignat}{2}
-\mbox{\rm div\,}(\alpha(x)\nabla u) + \beta(x)\cdot\nabla u + \gamma(x)u &= 0\qquad\text{in}\quad&&\Omega,\label{eq20160622_3}\\
u&=0\qquad\text{on}\quad&&\partial\Omega.\label{eq20160622_4}
\end{alignat}
Here the coefficients $\alpha(x)\in\mathbb{R}^{2\times2}, \beta(x)\in\mathbb{R}^{2}$ and $\gamma(x)\in\mathbb{R}^{1}$ are assumed to be smooth on $\bar{\Omega}$, and $\alpha(x)$ satisfies, for some constant $C_1 > 0$, that
\begin{align}\label{eq20180326_1}
C_1|\xi|^2\le\xi^T\alpha(x)\xi \qquad\forall\xi\in\mathbb{R}^2. 
\end{align}
We also assume $\gamma(x) \geq 0$ in order to guarantee the solution $u$ of \eqref{eq20160622_3}--\eqref{eq20160622_4} is isolated.

Second, consider the following second order nonlinear problem 
\begin{alignat}{2}\label{eq20180219_5}
-\mathrm{div}(f(x,u,\nabla u))+g(x,u,\nabla u)&=0\qquad&&\text{in}\quad\Omega,\\
u&=0\qquad&&\text{on}\quad\partial\Omega,\label{eq20180219_6}
\end{alignat}
where $f(x,y,z): \overline{\Omega}\times\mathbb{R}^1\times\mathbb{R}^2\longrightarrow\mathbb{R}^2$ and $g(x,y,z): \overline{\Omega}\times\mathbb{R}^1\times\mathbb{R}^2\longrightarrow\mathbb{R}^1$ are smooth functions. Assume the solution of \eqref{eq20180219_5}--\eqref{eq20180219_6} satisfies $u\in H^1_0(\Omega)\cap W^{2,2+\kappa}(\Omega)$ for some $\kappa>0$. For any $w\in W^{1,\infty}(\Omega)$, we define the following notations:
\begin{alignat*}{2}
a(w) &= D_zf(x,w,\nabla w)\in\mathbb{R}^{2\times2},\qquad &&b(w)= D_yf(x,w,\nabla w)\in\mathbb{R}^2,\\
c(w) &= D_zg(x,w,\nabla w)\in\mathbb{R}^{2},\qquad &&d(w) = D_yg(x,w,\nabla w)\in\mathbb{R}^1.
\end{alignat*}
Let $\mathcal{L}$ be the operator defined by the left-hand side of \eqref{eq20180219_5}. 
The linearized operator $\mathcal{L}$ at $w$ is  
\begin{align}\label{eq20190820_1_add}
\mathcal{L}'(w) v = - \text{div} (a(w) \nabla v + b(w) v) + c(w) \cdot \nabla v + d(w) v. 
\end{align}

In the following, we introduce two parameters $\delta_1$ and $\delta_2$:
\begin{equation*}
\delta_1=\begin{cases}
0 & \text{if}\ D_z^2f(x,y,z)=0,\ D_z^2g(x,y,z)=0,\\
1 & \text{otherwise},
\end{cases}
\end{equation*}
and
\begin{equation*}
\delta_2=\begin{cases}
0 & \text{if}\ \delta_1=0,\ D_yD_zf(x,y,z)=0,\ D_yD_zg(x,y,z)=0,\\
1 & \text{otherwise}.
\end{cases}
\end{equation*}
In the case of $\delta_1 = 0$ and $\delta_2 = 1$, equation \eqref{eq20180219_5} becomes a mildly nonlinear PDE 
\begin{align}\label{eq20180219_7}
-\mbox{\rm div\,}(\widehat\alpha_1(x,u)\nabla u+\widehat\alpha_2(x,u)) + \widehat\beta(x,u)\cdot\nabla u +\widehat \gamma(x,u) = 0, 
\end{align}
which corresponds to $f(x,y,z) = \widehat\alpha_1(x,y) z + \widehat\alpha_2(x,y)$ and $g(x,y,z) = \widehat\beta(x,y) \cdot z + \widehat\gamma(x,y)$ in \eqref{eq20180219_5}.  For simplicity, we incorporate the term ${\rm div}(\widehat\alpha_2(x,u))$ into $\widehat\beta(x,u)\cdot\nabla u + \widehat\gamma(x,u)$. Therefore, \eqref{eq20180219_7} together with homogeneous Dirichlet boundary condition can be reduced to 
%
\begin{alignat}{2}
-\mbox{\rm div\,}(\widehat\alpha(x,u)\nabla u) + \widehat\beta(x,u)\cdot\nabla u + \widehat\gamma(x,u) &= 0\qquad\text{in}\quad&&\Omega,\label{eq20190702_2}\\
u&=0\qquad\text{on}\quad&&\partial\Omega, \label{eq20190702_3}
\end{alignat}
where we rewrite $\widehat\alpha_1(x,u)$ as $\widehat\alpha(x,u)$ and keep the notations of $\widehat\beta(x,u)$ and $\widehat\gamma(x,u)$. 

Similar to the non-SPD case, we assume 
\begin{align}\label{eq20190820_1}
C_2|\xi|^2\le\xi^T \widehat\alpha(x,u)\xi \qquad\forall\xi\in\mathbb{R}^2, 
\end{align}
for some constant $C_2 > 0$ and $- \text{div} (b(u)) + d(u) \geq 0$, which imply that $u$ is an isolated solution. 

We will focus on non-SPD linear PDEs and nonlinear PDEs in Section 3 and Section 4, respectively. In particular, we discuss mildly nonlinear PDEs with details and only describe algorithms for general nonlinear PDEs. 

\section{An adaptive two-grid finite element algorithm for non-SPD problems}\label{sec-3}
In this section, we present an adaptive two-grid finite element algorithm for non-SPD problems. The idea is to utilize the solutions on the $k$-th level adaptive meshes to transform the non-SPD problems into the SPD problems, and then to find the solutions of the SPD problems on the $(k+1)$-th level adaptive meshes. Denote $\mathcal{T}_k$ as the mesh in the $k$-th bisection and $H_K^k$ as the mesh size of $K$ in $\mathcal{T}_k$. 
Denote $\mathcal{E}_k$ as the set of mesh edges in the $k$-th bisection, $\mathcal{E}^i_k$ as the interior edges in $\mathcal{E}_k$, and $H_E^k$ as the size of the edge $E$ in $\mathcal{E}_k$.
 Then we define the $\mathcal{P}_r$-Lagrangian finite element space $\mathcal{V}_{H_K^k}$ on $\mathcal{T}_k$ below:
\begin{equation*}
\mathcal{V}_{H_K^k} = \bigl\{v_h \in H^1_0(\Omega): v_h|_{K} \in \mathcal{P}_r(K)\quad\forall K\in\mathcal{T}_k\bigr\},
\end{equation*}
where $\mathcal{P}_r$ denotes the space of all polynomials with degree less than or equal to $r$. Define the following notations
\begin{align*}
\widetilde{A}(u,v) &= A_S(u,v) + A_N(u,v), \\
A_S(u,v) &= (\alpha(x)\nabla u,\nabla v),\\
A_N(u,v) &= (\beta(x)\cdot\nabla u + \gamma(x)u,v).
\end{align*}
It is clear that the bilinear form $A_S(\cdot,\cdot)$ induces a norm on $H^1_0(\Omega)$, which is defined by 
\begin{align}\label{eq20160609_1}
|\|v\||_1^2:=A_S(v,v) \qquad \forall \, v \in H^1_0(\O).
\end{align}

The weak form of \eqref{eq20160622_3}-\eqref{eq20160622_4} is to seek $u\in H_0^1(\Omega)$ such that
\begin{alignat}{2}
\widetilde{A}(u,v) = 0 \qquad \forall \, v \in H^1_0(\Omega). 
\label{eq20160701_1}
\end{alignat}

Next, we present the adaptive two-grid finite element algorithm for \eqref{eq20160701_1}:
\begin{algorithm}[H]
\caption{The ATG finite element algorithm for non-SPD problems}
\label{algo1}
STEP 1: Find $u_{H_K^0}\in \mathcal{V}_{H_K^0}$ such that
\begin{align*}
\widetilde{A}(u_{H_K^0},v_{H_K^0}) = 0\qquad\forall v_{H_K^0}\in \mathcal{V}_{H_K^0};
\end{align*}
STEP 2: For $k\geq 0$, find $u_{H_K^{k+1}}\in \mathcal{V}_{H_K^{k+1}}$ such that
\begin{align*}
A_S(u_{H_K^{k+1}},v_{H_K^{k+1}}) + A_N(u_{H_K^k},v_{H_K^{k+1}}) = 0\qquad\forall v_{H_K^{k+1}}\in \mathcal{V}_{H_K^{k+1}}.
\end{align*}
\end{algorithm}

For each bisection $k$ in Step 2 of Algorithm \ref{algo1}, the error estimators (see Section \ref{sub_sec_3}) are computed to identify the elements that will be refined. Also, see Section 5 in the extended version of this paper in \cite{li2019analysis} for the implementation of the mesh refinement. In practice, the non-symmetric part $A_N(u_{H_K^k},v_{H_K^{k+1}})$ can be computed by interpolating $u_{H_K^k}$ from $\mathcal{T}_k$ to $\mathcal{T}_{k+1}$. 

\subsection{An {\em a posteriori} error estimate and convergence for the adaptive two-grid finite element algorithm \ref{algo1}}\label{sub_sec_3}
Denote $[\![ \cdot ]\!]$ as the jump of the function across the edges. Define the element residual on $K \in \T_k$ and the edge jump on $E \in \E^i_k$ by
\begin{align}
R_{K}^{k} &= -\mbox{div}(\alpha(x)\nabla u_{H_K^{k}}) + \beta(x)\cdot\nabla u_{H_K^{k-1}} + \gamma(x)u_{H_K^{k-1}}, \label{eq20160214_1}\\
J_{E}^{k} &= [\![\alpha(x)\nabla u_{H_K^{k}}]\!]_E.\label{eq20160214_2}
\end{align}
Note that we define $u_{H_K^{-1}} = u_{H_K^{0}}$ such that the above notations work for all $k \geq 0$. 
The local error estimators $\eta_{R,K}^k$ and $\eta_{J,E}^k$ are then defined by
\begin{align}
(\eta_{R,K}^k)^2&=(H_K^k)^2\|R_K^k\|_{L^2(K)}^2 \qquad \forall \, K \in \mathcal{T}_k,\label{eq20160214_3}\\
(\eta_{J,E}^k)^2&=H_E^k\|J_E^k\|_{L^2(E)}^2 \qquad \forall \, E \in \E^i_k.\label{eq20160214_4}
\end{align}
We define the global error estimators $\eta_{R}(u_{H_K^k},\mathcal{T}_k)$, $\eta_{J}(u_{H_K^k},\mathcal{T}_k)$ and $\eta(u_{H_K^k},\mathcal{T}_k)$ on the mesh $\mathcal{T}_k$ by
\begin{align}
\eta_{R}(u_{H_K^k},\mathcal{T}_k)&=\bigg(\sum_{K\in\mathcal{T}_k}(\eta_{R,K}^k)^2\bigg)^{1\slash2},\label{eq20160214_6}\\
\eta_{J}(u_{H_K^k},\mathcal{T}_k)&=\bigg(\sum_{E\in\mathcal{E}^i_k}(\eta_{J,E}^k)^2\bigg)^{1\slash2},\label{eq20160214_7}\\
\eta(u_{H_K^k},\mathcal{T}_k) &= \bigl((\eta_{R}(u_{H_K^k},\mathcal{T}_k))^2+(\eta_{J}(u_{H_K^k},\mathcal{T}_k))^2\bigr)^{1\slash2}.\label{eq20160214_8}
\end{align}

Let $\bar{R}_{K}^k$ and $\bar{J}_{E}^k$ be the $L^2$ projections to the piecewise $\mathcal{P}_{r-1}$ space respectively; then, define the oscillation terms:
\begin{align}
osc^R(u_{H_K^k},K) &:= H_K^k\|R_{K}^k-\bar{R}_{K}^k\|_{L^2(K)},\label{osc_1}\\
osc^J(u_{H_K^k},E) &:= (H_E^k)^{1\slash2}\|J_{E}^k-\bar{J}_{E}^k\|_{L^2(E)},\label{osc_2}\\
osc(u_{H_K^k},\mathcal{T}_k) &:= \bigg(\sum_{K\in\mathcal{T}_k}(osc^R(u_{H_K^k},K))^2+\sum_{E\in\mathcal{E}^i_k}(osc^J(u_{H_K^k},E))^2\bigg)^{1\slash2}.\label{osc_3}
\end{align}

A reliable upper bound of the error is given below. It shows that the error of the adaptive two-grid finite element algorithm \ref{algo1} can be bounded by the error estimator and a higher-order term. 
\begin{theorem}\label{thm20160701_1}
Let $u$ and $u_{H_K^{k+1}}$ be the solution of \eqref{eq20160701_1} and the adaptive two-grid finite element algorithm \ref{algo1}, respectively, and then
\begin{align*}
|\|u-u_{H_K^{k+1}}\||_1 \le C \eta(u_{H_K^{k+1}}, \T_{k+1}) + C\|u_{H_K^{k}}-u\|_{L^2(\Omega)}. 
\end{align*}
\end{theorem}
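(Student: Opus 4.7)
The plan is to derive an error equation that combines the weak form \eqref{eq20160701_1} with Step 2 of Algorithm \ref{algo1}, and then apply a standard residual-based argument on the symmetric part $A_S$ while handling the non-symmetric defect by an integration-by-parts trick. Setting $e=u-\uc$, which lies in $H^1_0(\Omega)$, I would start from the identity $A_S(e,v)=-A_N(u,v)-A_S(\uc,v)$, insert an arbitrary $v_h\in \mathcal{V}_{H_K^{k+1}}$, and use Step 2 to replace $A_S(\uc,v_h)$ by $-A_N(\uk,v_h)$. This yields
$$A_S(e,v)=A_N(\uk-u,\,v)\;-\;\bigl[A_S(\uc,\,v-v_h)+A_N(\uk,\,v-v_h)\bigr]\quad\forall\,v\in H^1_0(\Omega).$$
Choosing $v=e$ and $v_h=\Pi_h e$ for a Clément or Scott--Zhang quasi-interpolant $\Pi_h$ onto $\mathcal{V}_{H_K^{k+1}}$ then reduces the proof to estimating the two terms on the right.

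The bracketed term is classical. Elementwise integration by parts on $A_S(\uc,\,e-\Pi_h e)$, together with the jump assembly over $\mathcal{E}^i_{k+1}$ (the boundary contributions cancel since $e-\Pi_h e$ vanishes on $\partial\Omega$), combined with $A_N(\uk,\,e-\Pi_h e)$, produces exactly
$$\sum_{K\in\mathcal{T}_{k+1}}(R_K^{k+1},\,e-\Pi_h e)_K+\sum_{E\in\mathcal{E}^i_{k+1}}(J_E^{k+1},\,e-\Pi_h e)_E,$$
with $R_K^{k+1}$ and $J_E^{k+1}$ as in \eqref{eq20160214_1}--\eqref{eq20160214_2}. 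Cauchy--Schwarz together with the standard local interpolation estimates $\|e-\Pi_h e\|_{L^2(K)}\le CH_K^{k+1}\|\nabla e\|_{L^2(\omega_K)}$ and $\|e-\Pi_h e\|_{L^2(E)}\le C(H_E^{k+1})^{1/2}\|\nabla e\|_{L^2(\omega_E)}$, the finite overlap of patches, and the norm equivalence $\|\nabla\cdot\|_{L^2}\sim |\|\cdot\||_1$ coming from \eqref{eq20180326_1}, deliver a bound of the form $C\,\eta(\uc,\mathcal{T}_{k+1})\,|\|e\||_1$.

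The main obstacle is the defect term $A_N(\uk-u,\,e)$. A naive Cauchy--Schwarz would leak a derivative onto $\uk-u$ and produce $\|\uk-u\|_{H^1(\Omega)}$, which is too strong to be a higher-order perturbation. The remedy is to integrate by parts on the convection piece $(\beta\cdot\nabla(\uk-u),\,e)$, transferring the gradient onto $\beta e$ by using the homogeneous Dirichlet data of $e$; since $\beta$ and $\gamma$ are smooth on $\bar\Omega$, the resulting expression is bounded by $C\,\|\uk-u\|_{L^2(\Omega)}(\|\nabla e\|_{L^2}+\|e\|_{L^2})\le C\,\|\uk-u\|_{L^2(\Omega)}\,|\|e\||_1$ after invoking Poincaré. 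Adding the two estimates, using $A_S(e,e)=|\|e\||_1^2$, and dividing through by $|\|e\||_1$ yields the claim. The novelty relative to the usual residual argument is precisely this integration-by-parts step, which makes the coarse-to-fine coupling $\|\uk-u\|_{L^2(\Omega)}$ enter as a genuinely lower-order perturbation rather than at full $H^1$ strength.
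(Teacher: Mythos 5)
Your proposal is correct and follows essentially the same route the paper takes for the nonlinear analogue, Theorem~\ref{thm:re:reliable:m} (the proof of Theorem~\ref{thm20160701_1} itself is deferred to \cite{li2018analysis}): Galerkin orthogonality with a Scott--Zhang quasi-interpolant to assemble the element residuals $R_K^{k+1}$ and edge jumps $J_E^{k+1}$, plus the integration-by-parts step on the convection defect --- exactly the device used in \eqref{re:4} --- so that the coarse-grid error enters only through $\|u-u_{H_K^{k}}\|_{L^2(\Omega)}$ rather than at $H^1$ strength. No gaps; the argument is the intended one.
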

\begin{proof}
Let $v^I$ be the Scott-Zhang interpolation of $v$ on $\mathcal{V}_{H_K^{k+1}}$,  Then $\forall v\in H^1_0(\Omega)$, we have
\begin{align}\label{eq20160701_3}
&(\alpha(x)\nabla(u-u_{H_K^{k+1}}),\nabla v)\\
&= (-\beta(x)\cdot\nabla u - \gamma(x)u,v)-(\alpha(x)\nabla u_{H_K^{k+1}},\nabla v)\notag\\
&=(-\beta(x)\cdot\nabla u - \gamma(x)u,v)-(\alpha(x)\nabla u_{H_K^{k+1}},\nabla(v-v^I))\notag\\
&\qquad+(\beta(x)\cdot\nabla u_{H_K^{k}} + \gamma(x)u_{H_K^{k}},v^I)\notag\\
&=(\beta(x)\cdot\nabla(u_{H_K^{k}}- u) + \gamma(x)(u_{H_K^{k}}-u),v) \notag\\
&\quad-\sum_{E\in\mathcal{E}_{k+1}}([\![\alpha(x)\nabla(u_{H_K^{k+1}})]\!],v-v^I)+\sum_{K\in\mathcal{T}_{k+1}}(\mathrm{div}(\alpha(x)\nabla(u_{H_K^{k+1}})),v-v^I)\notag\\
&\quad-(\beta(x)\cdot\nabla u_{H_K^{k}} + \gamma(x)u_{H_K^{k}},v-v^I)\notag\\
&=-\sum_{K\in\mathcal{T}_{k+1}}(R_K^{k+1},v-v^I)-\sum_{K\in\mathcal{E}_{k+1}}(J_E^{k+1},v-v^I)\notag\\
&\quad+(\beta(x)\cdot\nabla(u_{H_K^{k}}- u) + \gamma(x)(u_{H_K^{k}}-u),v)\notag.
\end{align}

Using the Cauchy-Schwarz inequality, the trace inequality and properties of the Scott-Zhang interpolation operator, we have
\begin{align}\label{eq20160701_4}
&(\alpha(x)\nabla(u-u_{H_K^{k+1}}),\nabla v)\\
&\quad\le \bigg(\sum_{K\in\mathcal{T}_{k+1}}(H_K^{k+1})^2\|R_K^{k+1}\|_{L^2(K)}^2\bigg)^{\frac12}\bigg(\sum_{K\in\mathcal{T}_{k+1}}(H_K^{k+1})^{-2}\|v-v^I\|_{L^2(K)}^2\bigg)^{\frac12}\notag\\
&\quad+ C\bigg(\sum_{E\in\mathcal{E}_{k+1}}H_K^{k+1}\|J_E^{k+1}\|_{L^2(E)}^2\bigg)^{\frac12}\bigg(\sum_{E\in\mathcal{E}_{k+1}}(H_K^{k+1})^{-1}\|v-v^I\|_{L^2(E)}^2\bigg)^{\frac12}\notag\\
&\quad+(\beta(x)\cdot\nabla(u_{H_K^{k}}- u) + \gamma(x)(u_{H_K^{k}}-u),v)\notag\\
&\quad\le \eta_{R}(u_{H_K^{k+1}},\mathcal{T}_{k+1})\bigg(\sum_{K\in\mathcal{T}_{k+1}}(H_K^{k+1})^{-2}\|v-v^I\|_{L^2(K)}^2\bigg)^{\frac12}\notag\\
&\qquad+C\|u_{H_K^{k}}-u\|_{L^2(\Omega)}\|\nabla v\|_{L^2(\Omega)}\notag\\
&\qquad+C\eta_{J}(u_{H_K^{k+1}},\mathcal{T}_{k+1})\bigg(\sum_{E\in\mathcal{E}_{k+1}}(H_K^{k+1})^{-1}\|v-v^I\|_{L^2(E)}^2\bigg)^{\frac12}\notag\\
&\quad\le C\eta(u_{H_K^{k+1}}, \T_{k+1})\|\nabla v\|_{L^2(\Omega)}+C\|u_{H_K^{k}}-u\|_{L^2(\Omega)}\|\nabla v\|_{L^2(\Omega)}\notag.
\end{align}
The theorem is proved by choosing $v=u-u_{H_K^{k+1}}$.
\end{proof}

For the a posteriori error estimates of the classical finite element algorithms, the oscillation terms are usually higher-order terms, but for the proposed adaptive two-grid algorithm \ref{algo1}, the oscillation term may not be the higher-order term because of the low regularity of the solution. Next we will give an upper bound of the oscillation term, which plays crucial roles in proving the efficiency of the error estimator of the adaptive two-grid algorithm. As is seen below, it is bounded by the summation of the error and the higher-order term.
\begin{lemma}\label{lem20150623_11_add}
The oscillation term can be bounded by
\begin{align*}
osc(u_{H_K^{k+1}},\mathcal{T}_{k+1})&\leq e_1^{k+1}+e_2^{k+1}.
\end{align*}
where
\begin{align*}
e_1^{k+1}:=& \|\nabla (u - u_{H_K^{k+1}})\|_{L^2(\Omega)}+\bigg(\sum_{K\in\mathcal{T}_{k+1}}(H_K^{k+1})^2\|D^2u -D^2 u_{H_K^{k+1}}\|_{L^2(K)}^2\bigg)^\frac12\notag,\\
e_2^{k+1}:=&\|u - u_{H_K^k}\|_{L^2(\Omega)}+\bigg(\sum_{K\in\mathcal{T}_{k+1}}(H_K^{k+1})^2\|\nabla(u - u_{H_K^k})\|_{L^2(K)}^2\bigg)^\frac12\notag\\
& +\bigg(\sum_{K\in\mathcal{T}_{k+1}}(H_K^{k+1})^2\|\sigma^* - \bar{\sigma}^*\|_{L^2(K)}^2\bigg)^{\frac12}, \notag
\end{align*}
and $\sigma^*$ and $\bar{\sigma}^*$ are defined in the beginning of the proof.
\end{lemma}

\begin{proof}
For the residue estimator, define $\sigma^{k}$, $\hat{\sigma}^{k}$ and $\sigma^*$ by
\begin{align*}
\hat{\sigma}^k &:= -\alpha(x)^T:D^2u_{H_K^{k+1}}-\mbox{div}(\alpha(x)^T)\cdot\nabla u_{H_K^{k+1}} + \beta(x)\cdot\nabla u_{H_K^k} + \gamma(x)u_{H_K^k},\\
\sigma^* &:= -\alpha(x)^T:D^2u-\mbox{div}(\alpha(x)^T)\cdot\nabla u+ \beta(x)\cdot\nabla u + \gamma(x)u.
\end{align*}
Denote $\bar{\hat{\sigma}}^k$ and $\bar{\sigma}^*$ as the average of $\hat{\sigma}^k$ and $\sigma^*$ on $K$ respectively, and then
\begin{align}\label{eq20150623_12_add}
&\|\hat{\sigma}^k - \bar{\hat{\sigma}}^k\|_{L^2(K)}\\
\leq&\|\hat{\sigma}^k - \bar{\sigma}^*\|_{L^2(K)}\notag\\
\leq&\|\hat{\sigma}^k - \sigma^*\|_{L^2(K)}+\|\sigma^* - \bar{\sigma}^*\|_{L^2(K)}\notag\\
\leq&C\|D^2u - D^2u_{H_K^{k+1}}\|_{L^2(K)}+C\|\nabla (u - u_{H_K^{k+1}})\|_{L^2(K)}\notag\\
&\quad+C\|\nabla (u - u_{H_K^k})\|_{L^2(K)}+C\|u - u_{H_K^k}\|_{L^2(K)}+\|\sigma^* - \bar{\sigma}^*\|_{L^2(K)}.\notag
\end{align}
For the jump estimator, define $\hat{\delta}^{k}$ and $\delta$ by
\begin{align*}
\hat{\delta}^{k} &:= [\![\alpha(x)\nabla u_{H_K^{k+1}}]\!]_E,\\
\delta &:= [\![\alpha(x)\nabla u]\!]_E.
\end{align*}

Denote $\bar{\hat{\delta}}^k$ and $\bar{\delta}$ as the piecewise $L^2 $ projections of $\hat{\delta}^k$ and $\delta$ to $\mathcal{P}_{r-1}$, respectively.
Using the trace inequality with scaling and the triangle inequality, we have
\begin{align}\label{eq20160425_1_add}
&\|\hat{\delta}^k - \bar{\hat{\delta}}^k \|_{L^2(E)}\\
\leq & \| \hat{\delta}^k - \bar{{\delta}} \|_{L^2(E)} \notag\\
\leq &\| \hat{\delta}^k - {{\delta}} \|_{L^2(E)} + \| \delta  - \bar{{\delta}} \|_{L^2(E)} \notag \\
\leq &\| \[ \alpha(x) (\nabla \uc - \nabla u) \] \|_{L^2(E)} \notag \\
& + \| \[ (\alpha(x)-\alpha(x))\nabla u \] \|_{L^2(E)}+ \| \delta  - \bar{{\delta}} \|_{L^2(E)} \notag \\
\leq& C \sum_{K \in \omega_E} \Big( (\H^{k+1})^{-\frac12} \|\nabla(u - u_{H_K^{k+1}})\|_{L^2(K)}+ (\H^{k+1})^{\frac12}\|D^2u - D^2u_{H_K^{k+1}}\|_{L^2(K)} \notag \\
& + (\H^{k+1})^{-\frac12}   \|u-u_{H_K^k}\|_{L^2(K)}+C (\H^{k+1})^{\frac12}   \|\nabla(u-u_{H_K^k})\|_{L^2(K)} \Big) \notag\\
&+ \| \delta  - \bar{{\delta}} \|_{L^2(E)}, \notag 
\end{align}
where $\omega_E$ is the set of elements in $\T_{k+1}$ containing $E$ as an edge. 

Note that $\| \delta  - \bar{{\delta}} \|_{L^2(E)} = 0$ on $E \in \E^i_{k+1}$ since $u \in H^2(\Omega)$. Then using \eqref{eq20150623_12_add}, \eqref{eq20160425_1_add}, and the inverse inequality, we have
\begin{align}\label{eq20160425_3_add}
&\tilde{osc}(u_{H_K^{k+1}},\mathcal{T}_{k+1})\\
\leq &C\bigg(\sum_{K\in\mathcal{T}_{k+1}}(H_K^{k+1})^2\|D^2u -D^2 u_{H_K^{k+1}}\|_{L^2(K)}^2\bigg)^\frac12\notag\\
&+C\bigg(\sum_{K\in\mathcal{T}_{k+1}}(H_K^{k+1})^2\|\nabla(u - u_{H_K^{k+1}})\|_{L^2(K)}^2\bigg)^\frac12\notag\\
&+C\bigg(\sum_{K\in\mathcal{T}_{k+1}}(H_K^{k+1})^2\|\nabla(u - u_{H_K^k})\|_{L^2(K)}^2\bigg)^\frac12+C\|u - u_{H_K^k}\|_{L^2(\Omega)}\notag\\
&+C\bigg(\sum_{K\in\mathcal{T}_{k+1}}(H_K^{k+1})^2\|\sigma^* - \bar{\sigma}^*\|_{L^2(K)}^2\bigg)^{\frac12}+C\|\nabla(u - u_{H_K^{k+1}})\|_{L^2(\Omega)}\notag\\
\leq &C\|\nabla (u - u_{H_K^{k+1}})\|_{L^2(\Omega)}+C\bigg(\sum_{K\in\mathcal{T}_{k+1}}(H_K^{k+1})^2\|D^2u -D^2 u_{H_K^{k+1}}\|_{L^2(K)}^2\bigg)^\frac12\notag\\
&+C\|u - u_{H_K^k}\|_{L^2(\Omega)}+C\bigg(\sum_{K\in\mathcal{T}_{k+1}}(H_K^{k+1})^2\|\nabla(u - u_{H_K^k})\|_{L^2(K)}^2\bigg)^\frac12\notag\\
&+C\bigg(\sum_{K\in\mathcal{T}_{k+1}}(H_K^{k+1})^2\|\sigma^* - \bar{\sigma}^*\|_{L^2(K)}^2\bigg)^{\frac12} \notag \\ 
=& \tilde{e}_1^{k+1}+\tilde{e}_2^{k+1}\notag,
\end{align}
where $\tilde{e}_1^{k+1}, \tilde{e}_2^{k+1}$ denote the error term (the first two terms) and the higher-order terms (the last three terms) respectively.
\end{proof}

The following theorem gives a lower bound of the error of the adaptive two-grid finite element algorithm \ref{algo1}, i.e., the error is bounded below by the error estimator.  
\begin{theorem}\label{thm20160702_1}
Let $u$ and $u_{H_K^{k+1}}$ be the solution of \eqref{eq20160701_1} and the adaptive two-grid finite element algorithm \ref{algo1}, respectively, and then 
\begin{equation*}
\eta(u_{H_K^{k+1}}, \T_{k+1}) \leq C(e_1^{k+1}+e_2^{k+1}).
\end{equation*}
\end{theorem}
\begin{proof}
We divide the proof into three steps:\\
Step 1: Using the properties of the element bubble functions $\varphi_K$, we have
\begin{align}\label{eq20160702_5}
&\quad\frac{9}{20}\|\bar{R}_K^{k+1}\|_{L^2(K)}^2\\
&=(\bar{R}_K^{k+1},\varphi_K\bar{R}_K^{k+1})\notag\\
&=(R_K^{k+1},\varphi_K\bar{R}_K^{k+1})-(R_K^{k+1}-\bar{R}_K^{k+1},\varphi_K\bar{R}_K^{k+1})\notag\\
&=(\beta(x)\cdot\nabla (u_{H_K^k}-u) + \gamma(x)(u_{H_K^k}-u),\varphi_K\bar{R}_K^{k+1})\notag\\
&\quad+(\alpha(x)\nabla (u_{H_K^{k+1}}-u),\nabla(\varphi_K\bar{R}_K^{k+1}))-(R_K^{k+1}-\bar{R}_K^{k+1},\varphi_K\bar{R}_K^{k+1})\notag\\
&\le C(H_K^{k+1})^{-1}\|u_{H_K^k}-u\|_{L^2(K)}\|\bar{R}_K^{k+1}\|_{L^2(K)}+C(H_K^{k+1})^{-1}\notag\\
&\quad\cdot|\|u_{H_K^{k+1}}-u\||_1\|\bar{R}_K^{k+1}\|_{L^2(K)}+C\|R_K^{k+1}-\bar{R}_K^{k+1}\|_{L^2(K)}\|\bar{R}_K^{k+1}\|_{L^2(K)}.\notag
\end{align}
By \eqref{eq20160702_5} and the triangle inequality, we get
\begin{align}\label{eq20160702_6}
H_K^{k+1}\|R_K^{k+1}\|_{L^2(K)}\le &C\|u_{H_K^k}-u\|_{L^2(K)}+C|\|u_{H_K^{k+1}}-u\||_1\\
&+CH_K^{k+1}\|R_K^{k+1}-\bar{R}_K^{k+1}\|_{L^2(K)}.\notag
\end{align}
Step 2: Using the properties of the edge bubble functions $\psi_K$, we have
\begin{align}\label{eq20160702_7}
&\quad\frac{2}{3}\|\bar{J}_E^{k+1}\|_{L^2(E)}^2\\
&=(\bar{J}_E^{k+1},\psi_K\bar{J}_E^{k+1})_E\notag\\
&=(J_E^{k+1},\psi_K\bar{J}_E^{k+1})_E-(J_E^{k+1}-\bar{J}_E^{k+1},\psi_K\bar{J}_E^{k+1})_E\notag\\
&=(\alpha(x)\nabla u_{H_K^{k+1}},\nabla(\psi_K\bar{J}_E^{k+1}))+(\mathrm{div}(\alpha(x)\nabla u_{H_K^{k+1}}),\psi_K\bar{J}_E^{k+1}))\notag\\
&\quad-(J_E^{k+1}-\bar{J}_E^{k+1},\psi_K\bar{J}_E^{k+1})_E\notag\\
&=(\alpha(x)\nabla(u_{H_K^{k+1}}-u),\nabla(\psi_K\bar{J}_E^{k+1}))+(\beta(x)\cdot\nabla(u_{H_K^k}-u),\psi_K\bar{J}_E^{k+1})\notag\\
&\quad+(\gamma(x)(u_{H_K^k}-u),\psi_K\bar{J}_E^{k+1})-(R_K^{k+1},\psi_K\bar{J}_E^{k+1})\notag\\
&\quad-(J_E^{k+1}-\bar{J}_E^{k+1},\psi_K\bar{J}_E^{k+1})_E\notag\\
&\le C(H_K^{k+1})^{-\frac12}\|u_{H_K^{k+1}}-u)\|_{H^1(w_E)}\|\bar{J}_E^{k+1}\|_{L^2(E)}\notag\\
&\quad+C(H_K^{k+1})^{-\frac12}\|u_{H_K^k}-u\|_{L^2(w_E)}\|\bar{J}_E^{k+1}\|_{L^2(E)}\notag\\
&\quad+(H_K^{k+1})^{\frac12}\|R_K^{k+1}\|_{L^2(w_E)}\|\bar{J}_E^{k+1}\|_{L^2(E)}+C\|J_K^k-\bar{J}_K^k\|_{L^2(E)}\|\bar{J}_E^{k+1}\|_{L^2(E)}\notag,
\end{align}
where $w_E$ denotes all the triangles contain $E$ as an edge.

By \eqref{eq20160702_6} and triangle inequality, we have
\begin{align}\label{eq20160703_1}
&(H_K^{k+1})^{\frac12}\|J_K^{k+1}\|_{L^2(E)}\le C\|u_{H_K^k}-u\|_{L^2(w_E)}+C\|u_{H_K^{k+1}}-u\|_{H^1(w_E)}\\
&\qquad+C(H_K^{k+1})^{\frac12}\|J_K^{k+1}-\bar{J}_K^{k+1}\|_{L^2(E)}+CH_K^{k+1}\|R_K^{k+1}-\bar{R}_K^{k+1}\|_{L^2(K)}.\notag
\end{align}
Step 3: Summing \eqref{eq20160702_6} over all $K$, and then
\begin{align}\label{eq20160703_2}
\eta_{R}(u_{H_K^k},\mathcal{T}_k)^2\le &C\|u_{H_K^k}-u\|_{L^2(\Omega)}^2+C|\|u_{H_K^{k+1}}-u\||_1^2\\
&+C\sum_{K\in\mathcal{T}_{k+1}}(osc^R(u_{H_K^{k+1}},\mathcal{T}_{k+1}))^2.\notag
\end{align}
 
Summing \eqref{eq20160703_1} over all $E$, and then
\begin{align}\label{eq20160703_3}
&\eta_{J}(u_{H_K^k},\mathcal{T}_k)^2\le C\|u_{H_K^k}-u\|_{L^2(\Omega)}^2+C|\|u_{H_K^{k+1}}-u\||_1^2\\
&\qquad+\sum_{E\in\mathcal{E}_{k+1}}(osc^J(u_{H_K^{k+1}},\mathcal{T}_{k+1}))^2+C\sum_{K\in\mathcal{T}_{k+1}}(osc^R(u_{H_K^{k+1}},\mathcal{T}_{k+1}))^2.\notag
\end{align}

Combining \eqref{eq20160703_2}, \eqref{eq20160703_3} and Lemma \ref{lem20150623_11_add}, the theorem is proved.
\end{proof}


The next objective is to show the convergence of the error of the ATG finite element algorithm \ref{algo1}, i.e., the errors decrease to zero up to higher-order terms. We need the following bulk criterion (proposed by D\"{o}rfler \cite{dorfler1996convergent}), i.e., we mark the element set $\mathcal{M}_k \subset \T_k$ such that 
\begin{align}\label{20150623_7_add}
\eta^2(\uk, \mathcal{M}_k) \geq \theta \eta^2(\uk, \T_k), 
\end{align}
where $\theta\in(0,1)$ is a constant which implies the number of marked elements. 

The following two lemmas are useful in the proof of the error reduction property. The first lemma is extracted from \cite{hu2013convergence}, and we include its proof here for the convenience of the readers. 
\begin{lemma}\label{lem:conv1}
Define $\rho=1-\frac{1}{\sqrt{2}}$, we have 
\begin{align*}
\e2^2(u_{H_K^k},\mathcal{T}_{k+1})\leq \e2^2(u_{H_K^k},\mathcal{T}_{k})-\rho\e2^2(u_{H_K^k},\mathcal{T}_{k}\backslash \mathcal{T}_{k+1}).
\end{align*}
\end{lemma}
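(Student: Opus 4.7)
The plan is to follow the standard estimator reduction argument à la Cascón--Kreuzer--Nochetto--Siebert and compare $\tilde\eta^2(u_{H_K^k},\mathcal{T}_{k+1})$ with $\tilde\eta^2(u_{H_K^k},\mathcal{T}_k)$ contribution by contribution. The crucial observation that makes this work is that both the residual $\tilde R^k_K$ and the jump $\tilde J^k_E$ are computed purely from the coarse-level functions $u_{H_K^{k-1}}\in\mathcal{V}_{H_K^{k-1}}$ and $u_{H_K^k}\in\mathcal{V}_{H_K^k}$, which remain piecewise polynomial on any refinement of $\mathcal{T}_k$; consequently no new jump lines are introduced inside old elements when one refines, so the estimator on the refined mesh can be estimated element by element from the estimator on the old mesh without involving the actual PDE solution.

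First I would decompose each sum in \eqref{sec:re:est:1:m}--\eqref{sec:re:est:3:m} over $\mathcal{T}_{k+1}$ (resp.\ $\mathcal{E}^i_{k+1}$) into pieces indexed by elements and edges shared with $\mathcal{T}_k$, and pieces arising from the bisection of some $K\in\mathcal{T}_k\setminus\mathcal{T}_{k+1}$ (children of $K$, new edges interior to $K$, and bisected halves of old edges on $\partial K$). The shared pieces are identical on both meshes and cancel in the difference $\tilde\eta^2(u_{H_K^k},\mathcal{T}_{k+1})-\tilde\eta^2(u_{H_K^k},\mathcal{T}_k)$. For each refined $K$, bisection gives $|K'|=|K|/2$ and hence $H_{K'}^{k+1}\leq H_K^k/\sqrt{2}$ for every child $K'\subset K$, while the pointwise value of $\tilde R$ on $K'$ equals its value on $K$; new interior edges carry zero jump because $\alpha(u_{H_K^{k-1}})\nabla u_{H_K^k}$ is smooth across them; and the bisection of an old edge $E$ into $E_1\cup E_2$ satisfies $H_{E_i}^{k+1}=H_E^k/2$ together with $\sum_i\|\tilde J\|_{L^2(E_i)}^2=\|\tilde J\|_{L^2(E)}^2$.

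Summing these local bounds over $K\in\mathcal{T}_k\setminus\mathcal{T}_{k+1}$ and tracking the $1/\sqrt{2}$ scaling inherent to bisection in $d=2$ dimensions yields the claimed inequality with $\rho=1-1/\sqrt{2}=1-2^{-1/d}$. The main bookkeeping subtlety I anticipate concerns edges shared between a refined element and an unrefined neighbor: such an edge may either survive in $\mathcal{T}_{k+1}$ or be bisected, and its jump contribution must be attributed cleanly to the refined side so that the right-hand side $\rho\,\tilde\eta^2(u_{H_K^k},\mathcal{T}_k\setminus\mathcal{T}_{k+1})$ captures the savings with the stated constant and no double-counting. I would resolve this by associating half of each edge estimator with each of its two adjacent elements and applying a local Young-type inequality on the refined side, exactly as in the CKNS analysis.
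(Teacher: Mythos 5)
Your proposal is correct and follows essentially the same route as the paper's proof: decompose $\e2^2(u_{H_K^k},\mathcal{T}_{k+1})$ into the contributions of elements retained from $\mathcal{T}_k$ and of the children of bisected elements, use that the residual and jump densities are built from the fixed functions $u_{H_K^{k-1}}$ and $u_{H_K^k}$ (hence unchanged under refinement, with zero jump on newly created interior edges), and extract the factor $\frac{1}{\sqrt{2}}$ from the scaling of the element and edge sizes under one bisection. The paper compresses all of the local bookkeeping into the unproved claim \eqref{20150623_2}, so your more explicit accounting of the children's indicators and of edges shared with unrefined neighbours is simply a fuller version of the same argument.
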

\begin{proof}
By the definition of the estimator, we have
\begin{align}\label{20150623_1}
\e2^2(u_{H_K^k},\mathcal{T}_{k+1})= \e2^2(u_{H_K^k},\mathcal{T}_{k+1}\cap\mathcal{T}_{k})+\e2^2(u_{H_K^k},\mathcal{T}_{k+1}\backslash \mathcal{T}_{k}),
\end{align}
For any $K \in \T_k \setminus \T_{k+1}$, assume K is subdivided into $K=K^1\cup K^2$ with $K^1, K^2\in \mathcal{T}_{k+1}$ and $|K^1|=|K^2|=\frac12|K|$.
It is easy to show that
\begin{align}\label{20150623_2}
\sum_{i=1}^2\e2^2(u_{H_K^k},K^i)\leq\frac{1}{\sqrt{2}}\e2^2(u_{H_K^k},K),
\end{align}
Therefore, we have
\begin{align}\label{20150623_3}
\sum_{K^i \in\mathcal{T}_{k+1}\backslash \mathcal{T}_{k}}\sum_{i=1}^2\e2^2(u_{H_K^k},K^i)\leq\frac{1}{\sqrt{2}}\e2^2(u_{H_K^k},\mathcal{T}_{k}\backslash \mathcal{T}_{k+1}).
\end{align}
Pluging \eqref{20150623_3} into \eqref{20150623_1}, we get
\begin{align}\label{eq20180224_1}
\e2^2(u_{H_K^k},\mathcal{T}_{k+1})&\leq \e2^2(u_{H_K^k},\mathcal{T}_{k+1}\cap\mathcal{T}_{k})+\frac{1}{\sqrt{2}}\e2^2(u_{H_K^k},\mathcal{T}_{k}\backslash \mathcal{T}_{k+1}),\\
&\leq \e2^2(u_{H_K^k},\mathcal{T}_{k})-\rho\e2^2(u_{H_K^k},\mathcal{T}_{k}\backslash \mathcal{T}_{k+1}).\notag
\end{align}
The lemma is proved.
\end{proof}

\begin{lemma}\label{lem20150623_2}
Assume the bulk criterion \eqref{20150623_7_add} holds, and then for any $\epsilon>0$, there exists $\beta_1(\epsilon) > 0$ such that 
\begin{align*}
\eta^2(u_{H^{k+1}_K},\mathcal{T}_{k+1})\leq&(1+\epsilon)(1-\rho\theta)\eta^2(u_{H_K^k},\mathcal{T}_{k})\\
&+\frac{1}{\beta_1(\epsilon)}\|\nabla(u_{H^{k+1}_K}-u_{H_K^k})\|_{L^2(\Omega)}^2+C\|u_{H^{k}_K}-u_{H_K^{k-1}}\|_{L^2(\Omega)}^2.\notag
\end{align*}
\end{lemma}

\begin{proof}
Using the definition of the estimator, the inverse inequality, and the trace inequality with scaling, we have
\begin{align}\label{20150623_5}
&|\eta(u_{H^{k+1}_K},\mathcal{T}_{k+1})-\eta(u_{H_K^k},\mathcal{T}_{k+1})|\\
=& \bigg| \bigg(\sum_{K\in\mathcal{T}_{k+1}}(H^{k+1}_K)^2\|-\mbox{div}(\alpha(x)\nabla u_{H^{k+1}_K}) + \beta(x)\cdot\nabla u_{H_K^{k}} + \gamma(x)u_{H_K^{k}}\|_{L^2(K)}^2 \notag\\
&\qquad\qquad+ \sum_{E\in\mathcal{E}_{k+1}}H^{k+1}_K\bigl\|[\![\alpha(x)\nabla u_{H^{k+1}_K}\cdot\mathbf{n}]\!]\bigr\|_{L^2(E)}^2 \bigg)^{1\slash2} \notag\\
&-\bigg(\sum_{K\in\mathcal{T}_{k+1}}(H^{k+1}_K)^2\|-\mbox{div}(\alpha(x)\nabla u_{H_K^{k}}) + \beta(x)\cdot\nabla u_{H_K^{k-1}} + \gamma(x)u_{H_K^{k-1}}\|_{L^2(K)}^2 \notag\\
&\qquad\qquad+ \sum_{E\in\mathcal{E}_{k+1}}H^{k+1}_K\bigl\|[\![\alpha(x)\nabla u_{H_K^k}\cdot\mathbf{n}]\!]\bigr\|_{L^2(E)}^2 \bigg)^{1\slash2} \notag\\
\leq& \sum_{K\in\mathcal{T}_{k+1}}(H^{k+1}_K)^2\|-\mbox{div}(\alpha(x)\nabla u_{H^{k+1}_K}) + \mbox{div}(\alpha(x) \nabla \uk) \|^2_{L^2(K)} \notag \\
&\qquad\qquad + \sum_{K\in\mathcal{T}_{k+1}}(H^{k+1}_K)^2 \|\beta(x) \cdot \nabla (\uk - \up) \|^2_{L^2(K)} \notag \\
&\qquad\qquad + \sum_{K\in\mathcal{T}_{k+1}}(H^{k+1}_K)^2 \| \gamma(x) (\uk - \up)\|^2_{L^2(K)} \notag \\
&\qquad\qquad + \sum_{E\in\mathcal{E}_{k+1}}H^{k+1}_K\bigl\|[\![\alpha(x)\nabla (u_{H^{k+1}_K - \uk)}\cdot\mathbf{n}]\!]\bigr\|_{L^2(E)}^2 \notag \\
\leq& C\|\nabla(u_{H^{k+1}_K}-u_{H_K^k})\|_{L^2(\Omega)}+C\|u_{H^{k}_K}-u_{H_K^{k-1}}\|_{L^2(\Omega)}. \notag
\end{align}

Combining \eqref{20150623_5}, Young's inequality, and Lemma \ref{lem:conv1}, we get
\begin{align}\label{20150623_6}
\eta^2(u_{H^{k+1}_K},\mathcal{T}_{k+1})\leq& (1+\epsilon)\eta^2(u_{H_K^k},\mathcal{T}_{k})-(1+\epsilon)\rho\eta^2(u_{H_K^k},\mathcal{T}_k\backslash\mathcal{T}_{k+1})\\
&+\frac{1}{\beta_1(\epsilon)}\|\nabla(u_{H^{k+1}_K}-u_{H_K^k})\|_{L^2(\Omega)}^2+C\|u_{H^{k}_K}-u_{H_K^{k-1}}\|_{L^2(\Omega)}^2\notag, 
\end{align}
where $\beta_1(\epsilon) > 0$ and $\epsilon > 0$.

Now applying the bulk criterion in \eqref{20150623_7_add}:
\begin{align}
\eta^2(u_{H_K^k},\mathcal{T}_{k}\backslash\mathcal{T}_{k+1})\geq\theta \eta^2(u_{H_K^k},\mathcal{T}_{k}),
\end{align}
where $0<\theta<1$,
we then have 
\begin{align}\label{20150623_8}
\eta^2(u_{H^{k+1}_K},\mathcal{T}_{k+1})\leq&(1+\epsilon)(1-\rho\theta)\eta^2(u_{H_K^k},\mathcal{T}_{k})\\
&+\frac{1}{\beta_1(\epsilon)}\|\nabla(u_{H^{k+1}_K}-u_{H_K^k})\|_{L^2(\Omega)}^2+C\|u_{H^{k}_K}-u_{H_K^{k-1}}\|_{L^2(\Omega)}^2\notag, 
\end{align}
which complete the proof. 
\end{proof}

Based on Lemmas \ref{lem:conv1}--\ref{lem20150623_2}, we prove the quasi error decreases with respect to the number of mesh bisections up to some $L^2$-norms of the errors, which are higher-order terms on the uniform meshes.
\begin{theorem} (Error reduction)\label{thm20160214_4}
The following error reduction property holds
\begin{align*}
|\|u-u_{H^{k+1}_K}\||_1^2 + C\eta^2(u_{H^{k+1}_K},\mathcal{T}_{k+1})\leq&\zeta(\epsilon)\bigl(|\|u-u_{H_K^k}\||_1^2 + C\eta^2(u_{H_K^k},\mathcal{T}_{k})\bigr)\\
&+C\|u-u_{H_K^k}\|_{L^2(\Omega)}^2+C\|u-u_{H_K^{k-1}}\|_{L^2(\Omega)}^2, 
\end{align*}
for some $0 < \zeta(\epsilon) < 1$. 
\end{theorem}
\begin{proof}
By the Galerkin orthogonality, we have
\begin{align}\label{eq20160426_1}
|\|u_{H^{k+1}_K}-u_{H_K^k}\||_1^2 = |\|u-u_{H_K^k}\||_1^2 - |\|u-u_{H^{k+1}_K}\||_1^2\\
\qquad-(\alpha(x)\nabla(u-u_{H^{k+1}_K}),\nabla(u_{H^{k+1}_K}-u_{H_K^k})).\notag
\end{align}
Using Young's inequality and Poincar$\acute{e}$'s inequality, we get
\begin{align}\label{eq20160609_3}
&\qquad-A_S(u-u_{H^{k+1}_K},u_{H^{k+1}_K}-u_{H_K^k})\\
&=A_N(u,u_{H^{k+1}_K}-u_{H_K^k})-(\beta(x)\cdot\nabla u_{H_K^k} + \gamma(x)u_{H_K^k},u_{H^{k+1}_K}-u_{H_K^k})\notag\\
&\leq \delta_1 |\| u - \uc \||_1^2 + \delta_1 |\| u - \uk \||_1^2 + C(\delta_1) \| u - \uk \|_{L^2(\O)}^2, \notag 
\end{align}
for some $0 < \delta_1 < 1$ that will be determined later. 

Combining \eqref{eq20160426_1} and \eqref{eq20160609_3}, we have
\begin{align}\label{eq20160426_4}
(1 - \delta_1) |\| u - \uc \||_1^2 &\leq (1 + \delta_1) |\| u - \uk \||_1^2 - \| \uc - \uk \|^2_{H^1(\O)} \\
&\quad + C(\delta_1) \| u - \uk \|^2_{L^2(\O)}. \notag 
\end{align}

Using \eqref{eq20160426_4}, Lemma \ref{lem20150623_2}, 
denoting $\beta_2(\epsilon) = (1+\epsilon)(1-\rho\theta) < 1$,  
and choosing $\gamma_1, \gamma_2 > 0$ such that $\frac{\gamma_2}{\beta_1(\epsilon)} - \gamma_1 = 0$, we have 
\begin{align}\label{eq20160609_5}
&\gamma_1 (1-\delta_1)|\|u-u_{H^{k+1}_K}\||_1^2 + \gamma_2 \eta^2(u_{H^{k+1}_K},\mathcal{T}_{k+1})\\
&\qquad\leq \gamma_1(1+\delta_1) |\|u-u_{H_K^k}\||_1^2 + \gamma_2 \beta_2(\epsilon) \eta^2(u_{H_K^k},\mathcal{T}_{k})\notag\\
&\qquad\qquad+ \big(\frac{\gamma_2}{\beta_1(\epsilon)} - \gamma_1\big) \|u_{H^{k}_K}-u_{H_K^{k-1}}\|_{L^2(\Omega)}^2 + C \gamma_2 \| \uk - \up \|^2_{L^2(\O)} \notag\\
&\qquad\qquad + C(\delta_1) \gamma_1 \| u - \uk \|^2_{L^2(\O)} \notag \\ 
&\qquad = \zeta(\epsilon) \left[ \gamma_1(1-\delta_1) |\|u-u_{H_K^k}\||_1^2 + \gamma_2 \eta^2(u_{H_K^k},\mathcal{T}_{k}) \right] \notag \\
&\qquad\qquad + \gamma_1 \left[ (1+\delta_1) - \zeta(\epsilon) (1-\delta_1) \right] \|u - u_{H^{k}_K}\|_{L^2(\Omega)}^2 \notag \\
&\qquad\qquad + \gamma_2 (\beta_2(\epsilon) - \zeta(\epsilon)) \eta^2(u_{H_K^k},\mathcal{T}_{k}) \notag \\ 
&\qquad\qquad + C \gamma_2 \| \uk - \up \|^2_{L^2(\O)} + C(\delta_1) \gamma_1 \| u - \uk \|^2_{L^2(\O)}, \notag
\end{align}
where $0<\zeta(\epsilon)<1$ will be determined later. Applying the reliability of the estimator in Lemma \ref{thm20160701_1} below
\begin{align}\label{add1}
|\| u - \uk \||_1^2 \leq C_{Rel} \eta^2(\uk, \T_k) + C_{Rel} \| u - \up \|^2_{L^2(\O)}, 
\end{align}
and inserting this into the right-hand side of \eqref{eq20160609_5}, we have 
\begin{align}\label{add2}
\gamma_1 (1-\delta_1) &|\|u-u_{H^{k+1}_K}\||_1^2 + \gamma_2 \eta^2(u_{H^{k+1}_K},\mathcal{T}_{k+1}) \notag \\
&\leq  \zeta(\epsilon) \left[ \gamma_1(1-\delta_1) |\|u-u_{H_K^k}\||_1^2 + \gamma_2 \eta^2(u_{H_K^k},\mathcal{T}_{k}) \right] \\
&\quad + \big[ \gamma_1 [(1+\delta_1) - \zeta(\epsilon)(1-\delta_1)] C_{Rel} + \gamma_2 (\beta_2(\epsilon) - \zeta(\epsilon)) \big] \eta^2(\uk, \T_k) \notag \\ 
&\quad + C \| \uk - \up \|^2_{L^2(\O)} + C \| u - \uk \|^2_{L^2(\O)} + C \| u - \up \|^2_{L^2(\O)}. \notag 
\end{align}
We can choose $\delta_1>0$ and $\zeta(\epsilon)$ according to 
\begin{align}\label{add3}
\zeta(\epsilon) &= \frac{\gamma_1(1+\delta_1)C_{Rel} + \gamma_2 \beta_2(\epsilon)}{C_{Rel} (1-\delta_1) \gamma_1 + \gamma_2}, \\
\delta_1 &= \min \left\{ 1, \frac{\gamma_2(1-\beta_2(\epsilon))}{C_{Rel} \gamma_1} \right\}, 
\end{align}
such that $0 < \zeta(\epsilon) < 1$ and the second term on the right-hand side of \eqref{add2} vanishes. 

Finally, the estimate follows from a simple re-scaling of the coefficient in \eqref{add3}. 
\end{proof}

The following convergence result shows that the error of the ATG finite element algorithm \ref{algo1} decreases to zero up to higher-order terms, whose proof involves the reliability result in Theorem \ref{thm20160701_1} and the bulk criterion \eqref{20150623_7_add}.

\begin{theorem} (Convergence)\label{thm20160214_5}
Let $u$ and $u_{H_K^{k+1}}$ be the solution of \eqref{eq20160701_1} and the adaptive two-grid finite element algorithm \ref{algo1}, respectively. Assume the bulk criterion \eqref{20150623_7_add} holds, and then there exist $\rho > 0$ and $0 < \zeta < 1$ such that 
\begin{align*}
&|\|u-u_{H^{k+1}_K}\||_1^2 + \rho \eta^2(u_{H^{k+1}_K},\mathcal{T}_{k+1})\\
&\qquad\leq \zeta^{k+1}\bigl(|\|u-u_{H_K^0}\||_1^2+\rho\eta^2(u_{H_K^0},\mathcal{T}_0)\bigr)\\
&\qquad\quad+C\sum_{i=1}^{k}\|u-u_{H_K^{k-i}}\|_{L^2(\Omega)}^2\zeta^i+C\sum_{i=1}^{k}\|u-u_{H_K^{k-1-i}}\|_{L^2(\Omega)}^2\zeta^i\notag.
\end{align*}
\end{theorem}
\begin{proof}
Define $\|u-u_{H_K^{-1}}\|_{L^2(\Omega)}^2=\|u-u_{H_K^{0}}\|_{L^2(\Omega)}^2$. Using Theorem \ref{thm20160214_4}, we have
\begin{align}\label{eq20160426_3}
&|\|u-u_{H^{k+1}_K}\||_1^2 + C\eta^2(u_{H^{k+1}_K},\mathcal{T}_{k+1})\\
\leq&(\zeta(\epsilon))^{k+1}\bigl(|\|u-u_{H_K^0}\||_1^2+C\eta^2(u_{H_K^0},\mathcal{T}_0)\bigr)\notag\\
&+C\sum_{i=1}^{k}\|u-u_{H_K^{k-i}}\|_{L^2(\Omega)}^2(\zeta(\epsilon))^i+C\sum_{i=1}^{k}\|u-u_{H_K^{k-1-i}}\|_{L^2(\Omega)}^2(\zeta(\epsilon))^i\notag.
\end{align}
\end{proof}


\section{Adaptive two-grid finite element algorithms for nonlinear PDEs}\label{sec-4}
In this section, we present some algorithms for solving nonlinear PDEs. The idea is to transform nonlinear PDEs into linear ones using the coarser level solutions. 
For simplicity, we give the proof of the reliability, efficiency, and convergence of Algorithm \ref{algo3}. 

Define $A(u,v)$ by 
\begin{align}\label{eq20190820_3}
A(u,v) = (f(x,u,\nabla u), \nabla v) + (g(x,u,\nabla u), v). 
\end{align}
It is clear that the weak form of the nonlinear PDE \eqref{eq20190702_2}--\eqref{eq20190702_3} is given by 
\begin{align}\label{eq20190820_4}
A(u,v) = 0 \qquad \forall \, v \in H^1_0(\Omega). 
\end{align}
By \eqref{eq20190820_1}, the following energy norm is equivalent to $\| \cdot \|_{H^1(\O)}$ \begin{align}\label{norm:conv2}
|\| v \||_2^2 := (\widehat\alpha(x,u) \nabla v, \nabla v) \qquad \forall \, v \in H^1_0(\O). 
\end{align}
The following notation will be used in the subsequent subsections 
\begin{align*}
A_2(w,v,\xi) := (a(w)\nabla v + b(w)v,\nabla\xi)+(c(w)\cdot\nabla v+d(w)v,\xi).
\end{align*}

\subsection{Mildly nonlinear PDEs}
The solution $u$ of mildly nonlinear PDE \eqref{eq20190702_2}--\eqref{eq20190702_3} satisfies 
\begin{align*}
\widehat{A}(u,v) := (\widehat\alpha(x,u)\nabla u,\nabla v) + (\widehat\beta(x,u)\cdot\nabla u + \widehat\gamma(x,u),v) = 0 \qquad \forall \, v \in H^1_0(\Omega). 
\end{align*}
For $w,v,\xi\in W^{1,\infty}(\Omega)\cap H^1_0(\Omega)$, we define
\begin{align}\label{eqq20190702_1}
A_1(w,v,\xi) := (\widehat\alpha(x,w)\nabla v,\nabla\xi) + (\widehat\beta(x,w)\cdot\nabla v + \widehat\gamma(x,w),\xi).
\end{align}

%
%

Similar to Algorithm \ref{algo1}, the solutions on the coarser level meshes can be used to transform the nonlinear PDEs into linear ones. Algorithm \ref{algo3} is proposed by directly substituting the coefficients of the nonlinear PDE with coarser grid solutions. The error estimator will be presented in Section~\ref{subsec4.1.1}. 
Note that the initial mesh is usually chosen to be the uniform mesh, i.e., $H_K^0=H^0$ for all $K \in \T_0$. 

\begin{algorithm}[H]
\caption{The ATG finite element algorithm for mildly nonlinear problems}
\label{algo3}
STEP 1: Find $u_{H_K^0}\in \mathcal{V}_{H_K^0}$ such that
\begin{align*}
\widehat{A}(u_{H_K^0},v_{H_K^0}) = 0\qquad\forall v_{H_K^0}\in \mathcal{V}_{H_K^0};
\end{align*}
STEP 2: For $k \geq 0$, find $u_{H_K^{k+1}}\in \mathcal{V}_{H_K^{k+1}}$ such that
\begin{align*}
A_1(u_{H_K^k},u_{H_K^{k+1}},v_{H_K^{k+1}}) = 0\qquad\forall v_{H_K^{k+1}}\in \mathcal{V}_{H_K^{k+1}}.
\end{align*}
\end{algorithm}

Adding one-step Newton iteration in Algorithm \ref{algo3}, we obtain the following Algorithm \ref{algo4}.
\begin{algorithm}[H]
\caption{The ATG finite element algorithm with one-step Newton correction for mildly nonlinear problems}
\label{algo4}
STEP 1: Find $u_{H_K^0}\in \mathcal{V}_{H_K^0}$ such that
\begin{align*}
\widehat{A}(u_{H_K^0},v_{H_K^0}) = 0\qquad\forall v_{H_K^0}\in \mathcal{V}_{H_K^0};
\end{align*}
STEP 2: For $k\geq 0$, find $u_{H_K^{k+1}}\in \mathcal{V}_{H_K^{k+1}}$ such that
\begin{align*}
A_2(\tilde{u}_{H_K^{k+1}},u_{H_K^{k+1}},v_{H_K^{k+1}}) =& A_2(\tilde{u}_{H_K^{k+1}},\tilde{u}_{H_K^{k+1}},v_{H_K^{k+1}})\\
&-\hat{A}(\tilde{u}_{H_K^{k+1}},v_{H_K^{k+1}})\qquad\forall v_{H_K^{k+1}}\in \mathcal{V}_{H_K^{k+1}}, 
\end{align*}
where $\tilde{u}_{H_K^{k+1}}\in \mathcal{V}_{H_K^{k+1}}$ satisfies 
\begin{align*}
A_1(u_{H_K^k},\tilde{u}_{H_K^{k+1}},v_{H_K^{k+1}}) = 0\qquad\forall v_{H_K^{k+1}}\in \mathcal{V}_{H_K^{k+1}}. 
\end{align*}
\end{algorithm}

Next we present an {\em a posteriori} error estimator for Algorithm \ref{algo3}, and show that it is reliable and efficient. The convergence of the algorithm will also be addressed. 
 In this subsection, we assume the first order derivatives of $\widehat\alpha(\cdot,\cdot)$, $\widehat\beta(\cdot,\cdot)$, and $\widehat\gamma(\cdot,\cdot)$ are bounded.

\subsubsection{An {\em a posteriori} error estimator for Algorithm \ref{algo3}}\label{subsec4.1.1}
In this section, we introduce an {\em a posteriori} error estimator for the solution $\uk$ of  Algorithm \ref{algo3}. 
First, define the element residual and the edge jump by 
\begin{align}
\label{sec:re:residual-ele:m}
\R2K^k =& - \text{div} (\widehat\alpha(x,\up) \nabla \uk) + \widehat\beta(x,\up) \cdot \nabla \uk \\
&+ \widehat\gamma(x,\up) \qquad \forall \, K \in \T_k, \notag\\
\J2E^k =& \[ \widehat\alpha(x,\up) \nabla \uk \]_E \qquad \forall \, E \in \E^i_k. 
\end{align}

Now we define a global residual based {\em a posteriori} error estimator 
\begin{align}
\label{sec:re:est:1:m}
\e2(\uk,\T_k) &= \left( (\e2^k_R)^2 + (\e2^k_J)^2 \right)^{1/2}, \\
\label{sec:re:est:2:m}
\e2^k_R &= \left( \sum_{K \in \T_k} (\e2^k_{R,K})^2 \right)^{1/2}, \\
\label{sec:re:est:3:m}
\e2^k_J &= \left( \sum_{E \in \E^i_k} (\e2^k_{J,E})^2 \right)^{1/2}, 
\end{align}
where 
\begin{align}
(\e2^k_{R,K})^2 &= (\H^k)^2 \| \R2K^k \|^2_{L^2(K)} \qquad \forall \, K \in \T_k, \\
(\e2^k_{J,E})^2 &= H_E^k \| \J2E^k \|^2_{L^2(E)} \qquad \forall \, E \in \E^i_k. 
\end{align}

Similar to \eqref{osc_1}--\eqref{osc_3}, we can define three oscillation terms $\tilde{osc}^R(\uk, K)$, 
$\tilde{osc}^J(\uk, E)$, and $\tilde{osc}(\uk, \T_k)$ by 
replacing $R^k_K$ and $J^k_E$ with $\R2K^k$ and $\J2E^k$, respectively.  

\subsubsection{Reliability and efficiency for Algorithm \ref{algo3}} 
In this section, we derive a reliable upper bound and an efficient lower bound of the error $\| u - \uc \|_{H^1(\O)}$ for Algorithm \ref{algo3}. 

First, a reliable bound is given to show that the error of the ATG finite element Algorithm \ref{algo3} can be bounded by the error estimator up to higher-order terms.
\begin{theorem}\label{thm:re:reliable:m}
Let $u$ and $u_{H_K^{k+1}}$ be the solution of \eqref{eq20180219_5}--\eqref{eq20180219_6} and the adaptive two-grid finite element algorithm \ref{algo3}, respectively. Then there holds 
\begin{align*}
\| u - \uc \|_{H^1(\O)} \leq C \left( \e2(\uc, \T_{k+1}) + \| u - \uk \|_{L^2(\O)} + \| u - \uc \|_{L^2(\O)} \right). 
\end{align*}
\end{theorem}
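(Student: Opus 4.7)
The plan is to bound the energy $(\alpha(x,u)\nabla e,\nabla e)$ with $e:=u-\uc$, then invoke coercivity \eqref{eq20190820_1} together with Poincar\'e to recover $\|e\|_{H^1(\O)}$. Starting from the weak form $\widehat{A}(u,e)=0$, I replace $(\alpha(x,u)\nabla u,\nabla e)$ by $-(\beta(x,u)\cdot\nabla u+\gamma(x,u),e)$ and insert $\pm A_1(\uk,\uc,e)$, obtaining
\begin{align*}
(\alpha(x,u)\nabla e,\nabla e) &= -A_1(\uk,\uc,e) + ((\alpha(x,\uk)-\alpha(x,u))\nabla\uc,\nabla e) \\
&\quad + (\beta(x,\uk)\cdot\nabla\uc - \beta(x,u)\cdot\nabla u,\,e) + (\gamma(x,\uk)-\gamma(x,u),\,e).
\end{align*}
This cleanly isolates a Galerkin-residual piece from three coefficient-linearization pieces.

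The first piece I handle by the standard residual argument. Galerkin orthogonality $A_1(\uk,\uc,v_h)=0$ on $\mathcal{V}_{H_K^{k+1}}$ lets me replace $e$ by $e-I_h e$ for a Scott--Zhang quasi-interpolant $I_h e\in\mathcal{V}_{H_K^{k+1}}$; element-wise integration by parts produces the residuals $\R2K^{k+1}$ and jumps $\J2E^{k+1}$, and the standard interpolation bounds $\|e-I_h e\|_{L^2(K)}\le C\,\H^{k+1}\|\nabla e\|_{L^2(\omega_K)}$ and $\|e-I_h e\|_{L^2(E)}\le C\,(\HE^{k+1})^{1/2}\|\nabla e\|_{L^2(\omega_E)}$ together with Cauchy--Schwarz give $|A_1(\uk,\uc,e)|\le C\,\e2(\uc,\T_{k+1})\,\|\nabla e\|_{L^2(\O)}$.

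For the coefficient-linearization pieces I use smoothness of $\alpha,\beta,\gamma$ in the second argument to get pointwise Lipschitz bounds such as $|\alpha(x,\uk)-\alpha(x,u)|\le C|u-\uk|$, together with the regularity $u\in W^{2,2+\kappa}(\O)\hookrightarrow W^{1,\infty}(\O)$ in two dimensions. In $((\alpha(x,\uk)-\alpha(x,u))\nabla\uc,\nabla e)$ I split $\nabla\uc=\nabla u-\nabla e$: the $\nabla u$ part is bounded by $C\,\|\nabla u\|_{L^\infty(\O)}\|u-\uk\|_{L^2(\O)}\|\nabla e\|_{L^2(\O)}$, while the $\nabla e$ part is controlled via H\"older and the 2D embedding $H^1\hookrightarrow L^4$, producing a term of size $C\|u-\uk\|_{L^4(\O)}\|\nabla e\|_{L^2(\O)}^2$ that is absorbed asymptotically. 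The convection difference is rewritten as $(\beta(x,\uk)-\beta(x,u))\cdot\nabla\uc-\beta(x,u)\cdot\nabla e$ and treated analogously; the extra piece $(\beta(x,u)\cdot\nabla e,e)$ integrates by parts (using $e=0$ on $\partial\O$) to $-\tfrac12(\mathrm{div}\,\beta(x,u),e^2)$, giving a contribution of size $C\|u-\uc\|_{L^2(\O)}^2$. The $\gamma$-difference term is the simplest, bounded directly by $C\|u-\uk\|_{L^2(\O)}\|e\|_{L^2(\O)}$.

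Collecting all estimates, Young's inequality absorbs the $\|\nabla e\|_{L^2(\O)}$-factors into the coercive left-hand side, leaving $\|\nabla e\|_{L^2(\O)}^2\le C\,\e2(\uc,\T_{k+1})^2+C\|u-\uk\|_{L^2(\O)}^2+C\|u-\uc\|_{L^2(\O)}^2$; taking square roots and invoking Poincar\'e yields the claimed reliability bound. I expect the main obstacle to be the coefficient-linearization step: the quadratic-in-$\nabla e$ terms produced when $\nabla\uc$ is split into $\nabla u-\nabla e$ are only absorbable once $\|u-\uk\|_{L^4(\O)}$ (controlled by $\|u-\uk\|_{L^2(\O)}$ plus an $H^1$-a priori bound on $\uk$) is sufficiently small, which is precisely why the bound must include both $\|u-\uk\|_{L^2(\O)}$ and the slack term $\|u-\uc\|_{L^2(\O)}$ instead of a single strictly higher-order remainder as in the linear case (Theorem \ref{thm20160701_1}).
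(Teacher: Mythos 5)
Your overall architecture is the same as the paper's: coercivity of $(\alpha(u)\nabla\cdot,\nabla\cdot)$, insertion of the Galerkin relation $A_1(\uk,\uc,v_h)=0$ from Step 2 of Algorithm \ref{algo3}, Scott--Zhang interpolation plus element-wise integration by parts to produce $\e2(\uc,\T_{k+1})$, Lipschitz bounds on the coefficient differences to produce $\|u-\uk\|_{L^2(\O)}$, and an integration by parts on the transport term to produce $\|u-\uc\|_{L^2(\O)}$. The paper works with a generic test function $v$ and keeps $\beta(\uk)$ in the transport term (its \eqref{re:4} reads $-(\beta(\uk)\cdot\nabla(u-\uc),v)=(u-\uc,\nabla(\beta(\uk)v))$), whereas you test directly with $e$ and use $\beta(u)$; these are cosmetic differences.

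The one place where your route genuinely diverges and does not close is the $\alpha$-difference term. You split $\nabla\uc=\nabla u-\nabla e$ and claim the quadratic piece obeys
\begin{align*}
\Big|\int_\O \big(\alpha(x,\uk)-\alpha(x,u)\big)\nabla e\cdot\nabla e\,dx\Big|\le C\|u-\uk\|_{L^4(\O)}\|\nabla e\|_{L^2(\O)}^2,
\end{align*}
but H\"older with the exponent $4$ on $u-\uk$ forces $\nabla e\in L^{8/3}(\O)$ (the exponents $\tfrac14+\tfrac12+\tfrac12>1$ do not form a H\"older triple), and $\nabla e$ is only controlled in $L^2$; replacing $L^4$ by $L^\infty$ would fix the inequality but requires a pointwise error bound on $u-\uk$ that is neither assumed nor proved, and the resulting absorption would impose a smallness condition absent from the theorem. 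The paper sidesteps the split entirely: its \eqref{re:7} bounds $|((\alpha(u)-\alpha(\uk))\nabla\uc,\nabla v)|\le C\|u-\uk\|_{L^2(\O)}\|v\|_{H^1(\O)}$ in one stroke, tacitly using a uniform bound on $\|\nabla\uc\|_{L^\infty(\O)}$ (folded into the statement that $C$ depends on $\alpha$, $\beta$, $\gamma$). You should either adopt that direct bound or state explicitly the a priori $W^{1,\infty}$ stability of $\uc$ (or $L^\infty$ smallness of $u-\uk$) that your absorption argument needs; as written, this step fails.
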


\begin{proof}
It follows from \eqref{eq20190820_1} that
\begin{align}\label{re:1}
| u - \uc |^2_{H^1(\Omega)} \leq C (\widehat\alpha (x,u) \nabla (u - \uc), \nabla (u - \uc)). 
\end{align}
Therefore, it suffices to estimate $(\widehat\alpha(x,u) \nabla (u - \uc), \nabla v)$ for $v \in H^1_0(\O)$. 

Let $v^I$ be the Scott-Zhang interpolation \cite{scott1990finite} of $v$ on $\mathcal{V}_{H_K^{k+1}}$. From the weak formulation of \eqref{eq20190702_2}--\eqref{eq20190702_3} and Step 2 of Algorithm \ref{algo3}, we have for any $v \in H^1_0(\O)$, 
\begin{align}\label{re:2}
&(\widehat\alpha(x,u) \nabla (u - \uc), \nabla v) \\
=& (\widehat\alpha(x,u) \nabla u, \nabla v) - (\widehat\alpha(x,u) \nabla \uc, \nabla v) \notag \\
=& -(\widehat\beta(x,u) \cdot \nabla u + \widehat\gamma(x,u), v) - (\widehat\alpha(x,u) \nabla \uc, \nabla v) \notag \\
=& -( (\widehat\beta(x,u) - \widehat\beta(x,\uk) \cdot \nabla u , v) - (\widehat\beta(x,\uk) \cdot \nabla \uc, v - v^I) \notag \\
&\quad - (\widehat\beta(x,\uk) \cdot (\nabla u - \nabla \uc), v) - (\widehat\gamma(x,u) - \widehat\gamma(x,\uk) ,v) \notag \\
&\quad - (\widehat\gamma(x,\uk), v - v^I) - (\widehat\alpha(x,\uk) \nabla \uc, \nabla (v-v^I)) \notag \\
&\quad -( (\widehat\alpha(x,u) - \widehat\alpha(x,\uk)) \nabla \uc, \nabla v). \notag
\end{align} 

By integration by parts and properties (see Theorem 4.1 in \cite{scott1990finite}) of the Scott-Zhang interpolation, we have 
\begin{align}\label{re:3}
- (\widehat\gamma&(x,\uk), v - v^I) - (\widehat\alpha(x,\uk) \nabla \uc, \nabla (v-v^I)) \\
& \quad - (\widehat\beta(x,\uk) \cdot \nabla \uc, v - v^I) \notag \\
& = - \sum_{K \in \T_{k+1}} \int_K \R2K^{k+1} (v - v^I) \, dx - \sum_{E \in \E^i_{k+1}} \[ \widehat\alpha(x,\uk) \nabla \uc \] (v - v^I) \, ds \notag \\
& \leq C \tilde{\eta}(\uc, \T_{k+1}) \| v \|_{H^1(\O)}, \notag 
\end{align}
and 
\begin{align}\label{re:4}
- (\widehat\beta(x,\uk) \cdot (\nabla u - \nabla \uc), v) &= (u - \uc, \mbox{div} (\widehat\beta(x,\uk) v) ) \\
&\leq C \| u - \uc \|_{L^2(\O)} \|v\|_{H^1(\O)}. \notag 
\end{align}
The remaining terms on the right-hand side of \eqref{re:2} can be estimated by 
\begin{align}\label{re:5}
| ( (\widehat\beta(x,u) - \widehat\beta(x,\uk) \cdot \nabla u , v) | 
&\leq C \| u - \uk \|_{L^2(\O)} \|v\|_{H^1(\O)}, \\
| (\widehat\gamma(x,u) - \widehat\gamma(x,\uk) ,v) | &\leq C \| u - \uk \|_{L^2(\O)} \|v\|_{H^1(\O)}, \label{re:6} \\
| ( (\widehat\alpha(x,u) - \widehat\alpha(x,\uk)) \nabla \uc, \nabla v) | 
&\leq C \| u - \uk \|_{L^2(\O)} \|v\|_{H^1(\O)}. \label{re:7}
\end{align}

The constant $C$ here depends on $\widehat\alpha(x,u), \widehat\beta(x,u)$, and $\widehat\gamma(x,u)$. Finally, the theorem is proved by combining \eqref{re:2}--\eqref{re:7}. 
\end{proof}

For the {\em a posteriori} error estimates of the classical finite element algorithms, the oscillation terms are usually higher-order terms; however, it may not be true for 
Algorithm \ref{algo3} due to the low regularity of the numerical solution. A byproduct in this section is to give an upper bound of the oscillation terms, which plays a crucial role in proving the efficiency of the estimator of the adaptive two-grid algorithm. We will show that the oscillation terms are bounded by the summation of the errors and higher-order terms.
\begin{lemma}\label{lem20150623_11}
The oscillation term, which is defined in the end of Subsection \ref{subsec4.1.1}, can be bounded as below
\begin{align*}
\tilde{osc}(u_{H_K^{k+1}},\mathcal{T}_{k+1})&\leq C(\tilde{e}_1^{k+1}+\tilde{e}_2^{k+1}).
\end{align*}
where
\begin{align*}
\tilde{e}_1^{k+1}&:= \|\nabla (u - u_{H_K^{k+1}})\|_{L^2(\Omega)}+\bigg(\sum_{K\in\mathcal{T}_{k+1}}(H_K^{k+1})^2\|D^2u -D^2 u_{H_K^{k+1}}\|_{L^2(K)}^2\bigg)^\frac12\notag,\\
\tilde{e}_2^{k+1}&:=\|u - u_{H_K^k}\|_{L^2(\Omega)}+\bigg(\sum_{K\in\mathcal{T}_{k+1}}(H_K^{k+1})^2\|\nabla(u - u_{H_K^k})\|_{L^2(K)}^2\bigg)^\frac12\notag\\
& +\bigg(\sum_{K\in\mathcal{T}_{k+1}}(H_K^{k+1})^2\|\sigma - \bar{\sigma}\|_{L^2(K)}^2\bigg)^{\frac12}, \notag 
\end{align*}
and $\sigma$ and $\bar{\sigma}$ are defined in the beginning of the proof.
\end{lemma}

\begin{proof}
For the residual estimator, define $\hat{\sigma}^{k}$ and $\sigma$ by
\begin{align*}
\hat{\sigma}^k &:= -\widehat\alpha(x,u_{H_K^k})^T:D^2u_{H_K^{k+1}}-\mbox{div}(\widehat\alpha(x,u_{H_K^k})^T)\cdot\nabla u_{H_K^{k+1}}\\
&\qquad + \widehat\beta(x,u_{H_K^k})\cdot\nabla u_{H_K^{k+1}} + \widehat\gamma(x,u_{H_K^k}),\\
\sigma &:= -\widehat\alpha(x,u)^T:D^2u-\mbox{div}(\widehat\alpha(x,u)^T)\cdot\nabla u+ \widehat\beta(x,u)\cdot\nabla u + \widehat\gamma(x,u).
\end{align*}
Denote $\bar{\hat{\sigma}}^k$ and $\bar{\sigma}$ as the $L^2$ projections of 
$\hat{\sigma}^k$ and $\sigma$ to the piecewise $\mathcal{P}_{r-1}$ space, respectively, and then
\begin{align}\label{eq20150623_12}
&\|\hat{\sigma}^k - \bar{\hat{\sigma}}^k\|_{L^2(K)}\\
\leq&\|\hat{\sigma}^k - \bar{\sigma}\|_{L^2(K)}\notag\\
\leq&\|\hat{\sigma}^k - \sigma\|_{L^2(K)}+\|\sigma - \bar{\sigma}\|_{L^2(K)}\notag\\
\leq&C\|D^2u - D^2u_{H_K^{k+1}}\|_{L^2(K)}+C\|\nabla (u - u_{H_K^{k+1}})\|_{L^2(K)}\notag\\
&\quad+C\|\nabla (u - u_{H_K^k})\|_{L^2(K)}+C\|u - u_{H_K^k}\|_{L^2(K)}+\|\sigma - \bar{\sigma}\|_{L^2(K)},\notag
\end{align}
where the triangle inequality is used in the last inequality.

For the jump estimator, define $\hat{\delta}^{k}$ and $\delta$ on $E \in \E^i_{k+1}$ by
\begin{align*}
\hat{\delta}^{k} &:= [\![\widehat\alpha(x,u_{H_K^k})\nabla u_{H_K^{k+1}}]\!]_E,\\
\delta &:= [\![\widehat\alpha(x,u)\nabla u]\!]_E.
\end{align*}
Again, we denote $\bar{\hat{\delta}}^k$ and $\bar{\delta}$ as the piecewise $L^2 $ projections of $\hat{\delta}^k$ and $\delta$ to $\mathcal{P}_{r-1}$, respectively. 
Using the trace inequality with scaling and the triangle inequality, we have 
\begin{align}\label{add4}
&\|\hat{\delta}^k - \bar{\hat{\delta}}^k \|_{L^2(E)}\\
\leq & \| \hat{\delta}^k - \bar{{\delta}} \|_{L^2(E)} \notag\\
\leq &\| \hat{\delta}^k - {{\delta}} \|_{L^2(E)} + \| \delta  - \bar{{\delta}} \|_{L^2(E)} \notag \\
\leq &\| \[ \widehat\alpha(x,u_{H_K^k}) (\nabla \uc - \nabla u) \] \|_{L^2(E)} \notag \\
& + \| \[ (\widehat\alpha(x,u_{H_K^k})-\alpha(u))\nabla u \] \|_{L^2(E)}+ \| \delta  - \bar{{\delta}} \|_{L^2(E)} \notag \\
\leq& C \sum_{K \in \omega_E} \Big( (\H^{k+1})^{-\frac12} \|\nabla(u - u_{H_K^{k+1}})\|_{L^2(K)}+ (\H^{k+1})^{\frac12}\|D^2u - D^2u_{H_K^{k+1}}\|_{L^2(K)} \notag \\
& + (\H^{k+1})^{-\frac12}   \|u-u_{H_K^k}\|_{L^2(K)}+C (\H^{k+1})^{\frac12}   \|\nabla(u-u_{H_K^k})\|_{L^2(K)} \Big) \notag\\
&+ \| \delta  - \bar{{\delta}} \|_{L^2(E)}, \notag 
\end{align}
where $\omega_E$ is the set of elements in $\T_{k+1}$ containing $E$ as an edge. 

Note that $\| \delta  - \bar{{\delta}} \|_{L^2(E)} = 0$ on $E \in \E^i_{k+1}$ since $u \in H^2(\Omega)$. Then using \eqref{eq20150623_12}, \eqref{add4}, and the inverse inequality, we have
\begin{align}\label{eq20160425_3}
&\tilde{osc}(u_{H_K^{k+1}},\mathcal{T}_{k+1})\\
\leq &C\bigg(\sum_{K\in\mathcal{T}_{k+1}}(H_K^{k+1})^2\|D^2u -D^2 u_{H_K^{k+1}}\|_{L^2(K)}^2\bigg)^\frac12\notag\\
&+C\bigg(\sum_{K\in\mathcal{T}_{k+1}}(H_K^{k+1})^2\|\nabla(u - u_{H_K^{k+1}})\|_{L^2(K)}^2\bigg)^\frac12\notag\\
&+C\bigg(\sum_{K\in\mathcal{T}_{k+1}}(H_K^{k+1})^2\|\nabla(u - u_{H_K^k})\|_{L^2(K)}^2\bigg)^\frac12+C\|u - u_{H_K^k}\|_{L^2(\Omega)}\notag\\
&+C\bigg(\sum_{K\in\mathcal{T}_{k+1}}(H_K^{k+1})^2\|\sigma - \bar{\sigma}\|_{L^2(K)}^2\bigg)^{\frac12}+C\|\nabla(u - u_{H_K^{k+1}})\|_{L^2(\Omega)}\notag\\
&+C\bigg(\sum_{E\in\mathcal{E}_{k+1}}H_K^{k+1}\|\delta  - \bar{{\delta}} \|_{L^2(E)}^2\bigg)^{\frac12}\notag\\
\leq &C\|\nabla (u - u_{H_K^{k+1}})\|_{L^2(\Omega)}+C\bigg(\sum_{K\in\mathcal{T}_{k+1}}(H_K^{k+1})^2\|D^2u -D^2 u_{H_K^{k+1}}\|_{L^2(K)}^2\bigg)^\frac12\notag\\
&+C\|u - u_{H_K^k}\|_{L^2(\Omega)}+C\bigg(\sum_{K\in\mathcal{T}_{k+1}}(H_K^{k+1})^2\|\nabla(u - u_{H_K^k})\|_{L^2(K)}^2\bigg)^\frac12\notag\\
&+C\bigg(\sum_{K\in\mathcal{T}_{k+1}}(H_K^{k+1})^2\|\sigma - \bar{\sigma}\|_{L^2(K)}^2\bigg)^{\frac12} \notag \\ 
=& \tilde{e}_1^{k+1}+\tilde{e}_2^{k+1}\notag,
\end{align}
where $\tilde{e}_1^{k+1}, \tilde{e}_2^{k+1}$ denote the error term (the first two terms) and the higher-order terms (the last three terms) respectively.
\end{proof}

Now we are ready to prove the global efficiency of the error estimator up to higher-order terms. 
\begin{theorem}\label{thm:re:efficiency:m}
Let $u$ and $u_{H_K^{k+1}}$ be the solutions of \eqref{eq20180219_5}--\eqref{eq20180219_6} and the adaptive two-grid finite element algorithm \ref{algo3}, respectively. Then there holds  
\begin{align*}
\e2(\uc, \T_{k+1}) \leq C \left( \tilde{e}_1^{k+1} +\tilde{e}_2^{k+1} \right). 
\end{align*}
\end{theorem}

\begin{proof}
We will establish the following local efficient estimates 
\begin{align}\label{re:e-est:1:m}
(\e2_{R,K}^{k+1})^2 &\leq C \big( \| u - \uc \|_{H^1(K)}^2 + \| u - \uk \|^2_{L^2(K)} \\
& \quad +  ( \tilde{osc}^{R}(\uc,K) )^2 \big) \qquad \forall \, K \in \T_{k+1}, \notag \\
(\e2_{J,E}^{k+1})^2 &\leq C \big( \| u - \uc \|_{H^1(\omega_E)}^2 + \| u - \uk \|^2_{L^2(\omega_K)} \label{re:e-est:2:m} \\
& \quad + \sum_{K \in \omega_E} (\tilde{osc}^R(\uc,K))^2 + (\tilde{osc}^J(\uc,E))^2 \big) \qquad \forall \, E \in \E^i_{k+1}. \notag 
\end{align}
Then the theorem follows from \eqref{re:e-est:1:m}--\eqref{re:e-est:2:m} and Lemma \ref{lem20150623_11}. 

For $K \in \T_{k+1}$, let $\varphi_K$ be the element bubble function \cite{verfurth2013posteriori}, and then we have 
\begin{align}\label{re:e-est:3:m}
(\bar{R}^{k+1}_K, \varphi_K \bar{R}^{k+1}_K)_K &= (\bar{R}^{k+1}_K - \R2K^{k+1}, \varphi_K \bar{R}^{k+1}_K)_K + (\R2K^{k+1}, \varphi_K \bar{R}^{k+1}_K)_K,  
\end{align}
where we used $(\cdot, \cdot)_K$ to denote the $L^2$ inner product in $L^2(K)$. 
By the weak formulation of \eqref{eq20190702_2}--\eqref{eq20190702_3}, integration by parts, and Proposition 1.4 in \cite{verfurth2013posteriori}, the second term on the right-hand side of \eqref{re:e-est:3:m} can be estimated by 
\begin{align}\label{re:e-est:4:m}
(\R2K^{k+1}, \varphi_K \bar{R}^{k+1}_K)_K &= (\widehat\alpha(x,\uk) \nabla \uc - \widehat\alpha(x,u) \nabla u, \nabla (\varphi_K \bar{R}^{k+1}_K))_K \\
&\quad + (\widehat\beta(x,\uk) \cdot \nabla \uc - \widehat\beta(x,u) \cdot \nabla u, \varphi_K \bar{R}^{k+1}_K))_K \notag \\
&\quad + (\widehat\gamma(x,\uk) - \widehat\gamma(x,u), \varphi_K \bar{R}^{k+1}_K)_K \notag \\
\leq C (H^{k+1}_K)^{-1}& \big( \| u - \uc \|_{L^2(K)} + \| u - \uk \|_{L^2(K)} \big) \| \bar{R}^{k+1}_K \|_{L^2(K)}. \notag 
\end{align}
Note that 
\begin{align}\label{re:e-est:5:m}
&(\bar{R}^{k+1}_K - \R2K^{k+1}, \varphi_K \bar{R}^{k+1}_K)_K \leq C \| \bar{R}^{k+1}_K - \R2K^{k+1} \|_{L^2(K)} \| \bar{R}^{k+1}_K \|_{L^2(K)}, \\
&\| \bar{R}^{k+1}_K \|^2_{L^2(K)} = (\bar{R}^{k+1}_K, \bar{R}^{k+1}_K)_K \leq C (\bar{R}^{k+1}_K, \varphi_K \bar{R}^{k+1}_K)_K, \label{re:e-est:6:m}
\end{align}
and then the estimate \eqref{re:e-est:1:m} follows from \eqref{re:e-est:3:m}--\eqref{re:e-est:6:m} and the triangle inequality.  

Next, for any $E \in \E_{k+1}$, using the properties of the edge bubble function $\varphi_E$ (see Proposition 1.4 in \cite{verfurth2013posteriori}) and the integration by parts, we have 
\begin{align}\label{re:e-est:7:m}
(\tilde{J}_E^{k+1}, \varphi_E \bar{J}^{k+1}_E)_E &= (\widehat\alpha(x,\uk) \nabla \uc - \widehat\alpha(x,u) \nabla u, \nabla (\varphi_E \bar{J}_E^{k+1}) )_{\omega_E} \notag \\
&\quad + (\widehat\beta(x,\uk) \cdot \nabla \uc - \widehat\beta(x,u) \cdot \nabla u, \varphi_E \bar{J}_E^{k+1} )_{\omega_E} \\
&\quad + (\widehat\gamma(x,\uk) - \widehat\gamma(x,u), \varphi_E \bar{J}_E^{k+1})_{\omega_E} - (\R2K^{k+1}, \varphi_E \bar{J}_E^{k+1})_{\omega_E} \notag \\
\leq& C (H^{k+1}_E)^{-\frac12} \big( \| u - \uc \|_{H^1(\omega_E)} + \| u - \uk \|_{L^2(\omega_E)} \big) \| \bar{J}_E^{k+1} \|_{L^2(E)} \notag \\
&\quad + C (H^{k+1}_E)^{\frac12} \| \tilde{R}_K^{k+1} \|_{L^2(\omega_E)} \| \bar{J}_E^{k+1} \|_{L^2(E)}. \notag 
\end{align}
By applying the triangle inequality, \eqref{re:e-est:1:m}, and the fact that 
\begin{align}\label{re:e-est:8:m}
(\bar{J}_E^{k+1}, \bar{J}_E^{k+1})_E &\leq C (\bar{J}_E^{k+1}, \varphi_E \bar{J}_E^{k+1})_E \\
&= C (\tilde{J}_E^{k+1}, \varphi_E \bar{J}_E^{k+1})_E + C (\bar{J}_E^{k+1} - \tilde{J}^{k+1}_E, \varphi_E \bar{J}_E^{k+1})_E, \notag 
\end{align}
we complete the proof of \eqref{re:e-est:2:m}. 
\end{proof}

\subsubsection{Convergence of Algorithm~\ref{algo3}}\label{sec:conv:algo3}
In this section we discuss the convergence of the adaptive two-grid finite element Algorithm~\ref{algo3}, assuming the bulk criterion \eqref{20150623_7_add} holds. The following lemma and Lemma \ref{lem:conv1} are useful to prove the error reduction property.
\begin{lemma}\label{lem:conv2}
Assume the bulk criterion \eqref{20150623_7_add} holds, and then for any $\epsilon>0$, there exists $\beta_1(\epsilon) > 0$ and $C_\epsilon > 0$ depending on $\epsilon$ such that 
\begin{align}\label{lem:conv2:est}
\e2^2(u_{H^{k+1}_K},\mathcal{T}_{k+1})\leq&(1+\epsilon)(1-\rho\theta)\e2^2(u_{H_K^k},\mathcal{T}_{k}) \\
&+\frac{C}{\beta_1(\epsilon)} |\| u_{H^{k+1}_K}-u_{H_K^k}) |\|_2^2+C_\epsilon\|u_{H^{k}_K}-u_{H_K^{k-1}}\|_{L^2(\Omega)}^2.  \notag
\end{align}
\end{lemma}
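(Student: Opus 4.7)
The plan is to split the inequality into three ingredients: (i) a Lipschitz-type perturbation that replaces $\uc$ by $\uk$ inside the estimator, (ii) the mesh-reduction identity of Lemma~\ref{lem:conv1}, and (iii) the D\"orfler bulk criterion~\eqref{Dorfler}. Intuitively, once the fine-level solution $\uc$ has been replaced by the coarse-level $\uk$, the intermediate estimator $\e2(\uk,\T_{k+1})$ satisfies the hypotheses of Lemma~\ref{lem:conv1} because it is the same numerical function evaluated on two nested meshes, and the marking step then contracts that quantity against $\e2(\uk,\T_k)$.

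First I would establish a continuous-dependence inequality of the form
\begin{align*}
\e2^2(\uc,\T_{k+1}) \le (1+\epsilon)\,\e2^2(\uk,\T_{k+1}) + C_\epsilon\bigl(|\|\uc-\uk\||_2^2 + \|\uk-\up\|_{L^2(\O)}^2\bigr).
\end{align*}
To do this I expand each difference $\R2K^{k+1}-\R2K(\uk;\up)$ and $\J2E^{k+1}-\J2E(\uk;\up)$ into a ``fine-solution'' part involving $\nabla(\uc-\uk)$ (multiplied by $\alpha(\uk),\beta(\uk)$) and a ``coefficient'' part involving $\alpha(\uk)-\alpha(\up)$, $\beta(\uk)-\beta(\up)$, $\gamma(\uk)-\gamma(\up)$ acting on $\uk$. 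Applying Young's inequality $|a+b|^2\le(1+\epsilon)|a|^2+(1+\epsilon^{-1})|b|^2$ element by element, summing with the $(\H^{k+1})^2$ and $\HE^{k+1}$ weights, and using inverse and trace inequalities to convert the $H^2$-scaled residual into $H^1$-scaling, I expect the fine-solution piece to be controlled by $|\|\uc-\uk\||_2^2$. For the coefficient piece I use the Lipschitz continuity of $\alpha,\beta,\gamma$ together with the $W^{1,\infty}$-regularity of the finite element iterate $\uk$ so that, e.g., $\|(\alpha(\uk)-\alpha(\up))\nabla\uk\|_{L^2(K)}\le C\|\uk-\up\|_{L^2(K)}$, which plugs directly into the second perturbation term after the $(\H^{k+1})^2$ weight is absorbed.

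Second, since $\e2(\uk,\T_{k+1})$ uses the single numerical solution $\uk$ on a refinement of $\T_k$, Lemma~\ref{lem:conv1} applies verbatim and yields
\begin{align*}
\e2^2(\uk,\T_{k+1}) \le \e2^2(\uk,\T_k) - \rho\,\e2^2(\uk,\T_k\setminus\T_{k+1}).
\end{align*}
Because the marked set $\mathcal{M}_k$ is refined in the transition to $\T_{k+1}$, we have $\mathcal{M}_k\subseteq\T_k\setminus\T_{k+1}$, so the bulk criterion~\eqref{Dorfler} gives $\e2^2(\uk,\T_k\setminus\T_{k+1})\ge\theta\,\e2^2(\uk,\T_k)$ and hence $\e2^2(\uk,\T_{k+1})\le(1-\rho\theta)\,\e2^2(\uk,\T_k)$. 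Chaining this with the first step (and renaming the $\epsilon$-dependent constants as $C/\beta_1(\epsilon)$ and $C_\epsilon$) produces the stated inequality.

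The main obstacle is the first step, because the residual $\R2K^{k+1}$ couples two different numerical solutions: $\uk$ sits inside the coefficients while $\uc$ is the function being differentiated. A clean separation requires care to make sure the \emph{coefficient} perturbation is absorbed into $\|\uk-\up\|_{L^2(\O)}^2$ rather than a stronger norm of $\uk-\up$, which is where the uniform $W^{1,\infty}$-bound on the finite element iterates and the Lipschitz assumption on the coefficients are essential. After this step the argument is a routine chaining of Lemma~\ref{lem:conv1} and~\eqref{Dorfler}.
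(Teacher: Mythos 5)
Your proposal follows essentially the same route as the paper's proof: a perturbation estimate splitting the residual and jump differences into a part controlled by $\|\nabla(\uc-\uk)\|_{L^2(\O)}$ and a coefficient part controlled by $\|\uk-\up\|_{L^2(\O)}$ (via inverse/trace inequalities and Lipschitz continuity of the coefficients), followed by Young's inequality, Lemma~\ref{lem:conv1}, and the D\"orfler criterion to obtain the $(1-\rho\theta)$ contraction. The argument is correct and matches the paper's decomposition step for step.
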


\begin{proof}
From \eqref{sec:re:est:1:m}, we have 
\begin{align}\label{lem:conv2:p1}
&|\e2(u_{H^{k+1}_K},\mathcal{T}_{k+1})-\e2(u_{H_K^k},\mathcal{T}_{k+1})|\\
=& \bigg| \bigg(\sum_{K\in\mathcal{T}_{k+1}}(H^{k+1}_K)^2\|-\mbox{div}(\widehat\alpha(x,\uk)\nabla u_{H^{k+1}_K}) + \widehat\beta(x,\uk)\cdot\nabla u_{H_K^{k+1}}  \notag\\
&\qquad+ \widehat\gamma(x,\uk)\|_{L^2(K)}^2+ \sum_{E\in\mathcal{E}_{k+1}}H^{k+1}_E\bigl\|[\![\widehat\alpha(x,\uk)\nabla u_{H^{k+1}_K}]\!]\bigr\|_{L^2(E)}^2 \bigg)^{1\slash2}\notag\\
&-\bigg(\sum_{K\in\mathcal{T}_{k+1}}(H^{k+1}_K)^2\|-\mbox{div}(\widehat\alpha(x,\up)\nabla u_{H_K^{k}}) + \widehat\beta(x,\up)\cdot\nabla u_{H_K^{k}}  \notag\\
&\qquad+ \widehat\gamma(x,\up)\|_{L^2(K)}^2+ \sum_{E\in\mathcal{E}_{k+1}}H^{k+1}_E\bigl\|[\![\widehat\alpha(x,\up)\nabla u_{H_K^k}]\!]\bigr\|_{L^2(E)}^2 \bigg)^{1\slash2} \bigg| \notag\\
\leq& \bigg( \sum_{K\in\mathcal{T}_{k+1}}(H^{k+1}_K)^2\|-\mbox{div}(\widehat\alpha(x,\uk)\nabla u_{H^{k+1}_K}) + \mbox{div}(\widehat\alpha(x,\up) \nabla \uk) \|^2_{L^2(K)} \notag \\
&\qquad + \sum_{K\in\mathcal{T}_{k+1}}(H^{k+1}_K)^2 \|\widehat\beta(x,\uk) \cdot \nabla \uc - \widehat\beta(x,\up) \cdot \nabla \uk \|^2_{L^2(K)} \notag \\
&\qquad + \sum_{K\in\mathcal{T}_{k+1}}(H^{k+1}_K)^2 \| \widehat\gamma(x,\uk) - \widehat\gamma(x,\up)\|^2_{L^2(K)} \notag \\
&\qquad + \sum_{E\in\mathcal{E}_{k+1}}H^{k+1}_E\bigl\|[\![\widehat\alpha(x,\uk)\nabla u_{{H^{k+1}_K}} - \widehat\alpha(x,\up) \nabla \uk]\!]\bigr\|_{L^2(E)}^2 \bigg)^{1/2}. \notag 
\end{align}

Now we estimate the first term on the right-hand side of \eqref{lem:conv2:p1}: 
\begin{align}\label{lem:conv2:p2}
\sum_{K\in\mathcal{T}_{k+1}}&(H^{k+1}_K)^2\|-\mbox{div}(\widehat\alpha(x,\uk)\nabla u_{H^{k+1}_K}) + \mbox{div}(\widehat\alpha(x,\up) \nabla \uk) \|^2_{L^2(K)} \\
&\leq C \sum_{K\in\mathcal{T}_{k+1}} (H^{k+1}_K)^2 \| \mbox{div} (\widehat\alpha(x,\uk)^T) \cdot \nabla (\uc - \uk) \|^2_{L^2(K)} \notag \\
& + C \sum_{K\in\mathcal{T}_{k+1}} (H^{k+1}_K)^2 \| \mbox{div}(\widehat\alpha(x,\uk)^T - \widehat\alpha(x,\up)^T) \cdot \nabla \uk \|^2_{L^2(K)}  \notag \\
& + C \sum_{K\in\mathcal{T}_{k+1}} (H^{k+1}_K)^2 \|\widehat \alpha(x,\uk)^T\cdot(D^2 \uc - D^2 \uk) \|^2_{L^2(K)} \notag \\
& + C \sum_{K\in\mathcal{T}_{k+1}} (H^{k+1}_K)^2 \| (\widehat\alpha(x,\uk)^T -\widehat\alpha(x,\up)^T)\cdot D^2 \uk \|^2_{L^2(K)} \notag \\
&\leq C\|\nabla(u_{H^{k+1}_K}-u_{H_K^k})\|^2_{L^2(\Omega)} + C\|u_{H^{k}_K}-u_{H_K^{k-1}}\|^2_{L^2(\Omega)},\notag 
\end{align}
where the inverse inequality is used. 
 The second and third terms on the right-hand side of \eqref{lem:conv2:p1} can be bounded similarly. 
By the trace theorem with scaling and the inverse inequality, we obtain the bound for the last term 
\begin{align}\label{lem:conv2:p3}
\sum_{E\in\mathcal{E}_{k+1}}&H^{k+1}_E\bigl\|[\![\widehat\alpha(x,\uk)\nabla u_{{H^{k+1}_K}} - \widehat\alpha(x,\up) \nabla \uk]\!]\bigr\|_{L^2(E)}^2 \\
&\leq C \Big( \sum_{E\in\mathcal{E}_{k+1}}H^{k+1}_E\bigl\|[\![\widehat\alpha(x,\uk) (\nabla u_{{H^{k+1}_K}} - \nabla \uk)]\!]\bigr\|_{L^2(E)}^2 \notag \\
&\qquad + \sum_{E\in\mathcal{E}_{k+1}}H^{k+1}_E\bigl\|[\![ (\widehat\alpha(x,\uk) - \widehat\alpha(x,\up) \nabla \uk]\!]\bigr\|_{L^2(E)}^2 \Big) \notag \\
&\leq C \| \nabla (\uc - \uk) \|^2_{L^2(\O)} + C\|u_{H^{k}_K}-u_{H_K^{k-1}}\|^2_{L^2(\Omega)}. \notag 
\end{align}

It now follows from \eqref{lem:conv2:p1}--\eqref{lem:conv2:p3}, Young's inequality, and the bulk criterion \eqref{20150623_7_add} that 
\begin{align}\label{20150623_7}
\eta^2(u_{H_K^k},\mathcal{T}_{k}\backslash\mathcal{T}_{k+1})\geq\theta \eta^2(u_{H_K^k},\mathcal{T}_{k}),
\end{align}

and thus 
\begin{align}\label{lem:conv2:p4}
\e2^2(\uc,\T_{k+1}) &\leq (1+\epsilon) \e2^2(\uk,\T_{k+1}) + \frac{C}{\beta_1(\epsilon)} \| \nabla (\uc - \uk) \|^2_{L^2(\O)} \\
&\qquad + C_\epsilon \| \uk - \up \|^2_{L^2(\O)} \notag \\
&\leq (1+\epsilon)(1 - \rho \theta) \e2^2(\uk,\T_{k}) + \frac{C}{\beta_1(\epsilon)} |\| \uc - \uk \||_2^2 \notag \\
&\qquad + C_\epsilon \| \uk - \up \|^2_{L^2(\O)}, \notag 
\end{align}
where $\beta_1(\epsilon) > 0$ and $C_\epsilon > 0$ depend on $\epsilon$. We complete the proof. 
\end{proof}

Define $|\|u-u_{H^{k+1}_K}\||_2^2 + C\eta^2(u_{H^{k+1}_K},\mathcal{T}_{k+1})$ to be  the quasi-error on the mesh $\T_{k+1}$. 
The following result shows that the quasi-error decreases with respect to the number of mesh bisections up to some $L^2$ norms of the errors, which are higher-order terms on uniform/adaptive meshes. 
\begin{theorem} (Error reduction)\label{thm:conv2:err-red}
There exist $\tilde{\rho} > 0$ and $0 < \tilde{\zeta} < 1$ such that 
\begin{align*}
|\|u-u_{H^{k+1}_K}\||_2^2 + & \tilde{\rho} \e2^2(u_{H^{k+1}_K},\mathcal{T}_{k+1})\leq\tilde{\zeta}\bigl(|\|u-u_{H_K^k}\||_2^2 + \tilde{\rho} \e2^2(u_{H_K^k},\mathcal{T}_{k})\bigr)\\
&\quad + C (\| u - \uc \|^2_{L^2(\O)} + \|u-u_{H_K^k}\|_{L^2(\Omega)}^2+\|u-u_{H_K^{k-1}}\|_{L^2(\Omega)}^2). 
\end{align*}
\end{theorem}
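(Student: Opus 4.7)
The strategy follows the standard template of D\"orfler-type convergence proofs for adaptive FEM: combine a quasi-Pythagorean energy identity with the estimator-reduction bound from Lemma \ref{lem:conv2} and the reliability estimate from Theorem \ref{thm:re:reliable:m}, then choose constants carefully to obtain contraction. The twist is that Algorithm \ref{algo3} only yields a \emph{perturbed} Galerkin orthogonality, so additional $L^2$-error terms arising from the linearization at $\uk$ must be produced and ultimately absorbed into the higher-order terms on the right-hand side.

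First I would subtract the linearized equation $A_1(\uk,\uc,v)=0$ from the weak form $A_1(u,u,v)=\widehat{A}(u,v)=0$ to obtain, for every $v\in\mathcal{V}_{H_K^{k+1}}$, the perturbed orthogonality
\begin{equation*}
(\alpha(\uk)\nabla(u-\uc),\nabla v)+(\beta(\uk)\cdot\nabla(u-\uc),v)=\mathcal{R}(u,\uk;v),
\end{equation*}
where $\mathcal{R}$ collects the coefficient perturbations $\alpha(u)-\alpha(\uk)$, $\beta(u)-\beta(\uk)$ and $\gamma(u)-\gamma(\uk)$ tested against $\nabla u$, $u$ and $1$, so that $|\mathcal{R}(u,\uk;v)|\le C\|u-\uk\|_{L^2}\|v\|_{H^1}$ by smoothness of the coefficients. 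Next I would expand
\begin{equation*}
|\|u-\uk\||_2^2=|\|u-\uc\||_2^2+2(\alpha(u)\nabla(u-\uc),\nabla(\uc-\uk))+|\|\uc-\uk\||_2^2,
\end{equation*}
and control the cross term in three substeps: replace $\alpha(u)$ by $\alpha(\uk)$ at a cost $C\|u-\uk\|_{L^\infty}\|\nabla(u-\uc)\|_{L^2}\|\nabla(\uc-\uk)\|_{L^2}$; apply the perturbed orthogonality with the admissible test function $v=\uc-\uk$ (admissible since mesh nesting gives $\mathcal{V}_{H_K^{k}}\subset\mathcal{V}_{H_K^{k+1}}$); and integrate the resulting $\beta$-term by parts to trade $\nabla(u-\uc)$ for $u-\uc$ in $L^2$. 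Young's inequality then yields
\begin{equation*}
(1-\delta)|\|u-\uc\||_2^2+\tfrac12|\|\uc-\uk\||_2^2\le|\|u-\uk\||_2^2+C\bigl(\|u-\uk\|_{L^2}^2+\|u-\uc\|_{L^2}^2\bigr).
\end{equation*}

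I would then add $\tilde\rho$ times Lemma \ref{lem:conv2}. The term $\tilde\rho C/\beta_1(\epsilon)\,|\|\uc-\uk\||_2^2$ coming from the lemma is absorbed by $\tfrac12|\|\uc-\uk\||_2^2$ on the left provided $\tilde\rho$ is small enough. The coefficient of $|\|u-\uk\||_2^2$ on the right is still $1$, so to force contraction I split $|\|u-\uk\||_2^2=(1-\mu)|\|u-\uk\||_2^2+\mu|\|u-\uk\||_2^2$ and bound the second piece via Theorem \ref{thm:re:reliable:m}, obtaining $|\|u-\uk\||_2^2\le C_R\bigl(\e2^2(\uk,\T_k)+\|u-\up\|_{L^2}^2+\|u-\uk\|_{L^2}^2\bigr)$. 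The $\e2^2(\uk,\T_k)$ coefficient on the right is then $\tilde\rho(1+\epsilon)(1-\rho\theta)+\mu C_R$; fixing $\epsilon$ so that $(1+\epsilon)(1-\rho\theta)<1$, then choosing $\mu$ small, and finally $\tilde\rho$ so that the coefficients of $|\|u-\uk\||_2^2$ and $\tilde\rho\e2^2(\uk,\T_k)$ both fall below a common $\tilde\zeta<1$, closes the contraction. The remaining $\|u-\uc\|_{L^2}^2$, $\|u-\uk\|_{L^2}^2$ and (via $\|\uk-\up\|_{L^2}^2\le 2\|u-\uk\|_{L^2}^2+2\|u-\up\|_{L^2}^2$) $\|u-\up\|_{L^2}^2$ contributions combine into exactly the additive higher-order terms stated in the theorem.

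The main technical obstacle is the cross term in the quasi-Pythagorean step: in the SPD Galerkin setting it vanishes identically, whereas here it only vanishes up to the residual $\mathcal{R}$ and the correction from $\alpha(u)-\alpha(\uk)$. Rigorously handling the latter requires $\|u-\uk\|_{L^\infty}$ to be sufficiently small, which I would justify either by an induction starting from a sufficiently fine initial mesh, or from the assumed regularity $u\in W^{2,2+\kappa}\hookrightarrow L^\infty$ together with the $H^1$-convergence already established at previous levels.
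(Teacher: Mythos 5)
Your proposal is correct and follows essentially the same route as the paper's proof: the quasi-Pythagorean expansion of $|\|u-\uc\||_2^2$, estimation of the cross term through the perturbed Galerkin relation of Algorithm \ref{algo3} (with integration by parts to trade gradient factors for $L^2$ factors), absorption of the $|\|\uc-\uk\||_2^2$ contribution from Lemma \ref{lem:conv2}, and a final application of the reliability bound with a careful ordering of the parameter choices to force $\tilde{\zeta}<1$. Your added remark on controlling $\|u-\uk\|_{L^\infty}$ when replacing $\alpha(u)$ by $\alpha(\uk)$ makes explicit a point the paper handles only implicitly, but it does not change the argument.
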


\begin{proof}
From \eqref{norm:conv2}, we have 
\begin{align}\label{thm:conv2:err-red:p1}
|\|u-u_{H^{k+1}_K}\||_2^2 &= |\|u-u_{H_K^k}\||_2^2 - |\|u_{H^{k+1}_K}-u_{H_K^k}\||_2^2 \\
& \qquad-2(\widehat\alpha(x,u)\nabla(u-u_{H^{k+1}_K}),\nabla(u_{H^{k+1}_K}-u_{H_K^k})).\notag
\end{align}
By Algorithm~\ref{algo3}, integration by parts, 
and Poincar\'{e}'s inequality, we have 
\begin{align}\label{thm:conv2:err-red:p2}
&- (\widehat\alpha(x,u) \nabla(u-u_{H^{k+1}_K}),\nabla(u_{H^{k+1}_K}-u_{H_K^k})) \\
= \ &  (\widehat\beta(x,u)\cdot\nabla u + \widehat\gamma(x,u), \uc - \uk)  - (\widehat\beta(x,\uk)\cdot\nabla \uc + \widehat\gamma(x,\uk), \notag \\
&  \uc - \uk)+ \left( (\widehat\alpha(x,u) - \alpha(\uk)) \nabla \uc, \nabla(\uc - \uk) \right) \notag \\
\leq \ & \epsilon_1 |\| u - \uc \||_2^2 + \epsilon_1 |\| u - \uk \||_2^2 + C(\epsilon_1) \| u - \uk \|_{L^2(\O)}^2 \notag \\
& + C(\epsilon_1) \| u - \uc \|_{L^2(\O)}^2, \notag 
\end{align}
where $C(\epsilon_1) > 0$ depends on $\epsilon_1 \in (0, 1)$, and $C(\epsilon_1)$ will be determined later. 

Combining \eqref{thm:conv2:err-red:p1} and \eqref{thm:conv2:err-red:p2}, we have
\begin{align}\label{thm:conv2:err-red:p3}
(1 - \epsilon_1) |\| u - \uc \||_2^2 &\leq (1 + \epsilon_1) |\| u - \uk \||_2^2 - \| \uc - \uk \|^2_{H^1(\O)} \\
&\quad + C(\epsilon_1) \| u - \uk \|^2_{L^2(\O)} + C(\epsilon_1) \| u - \uc \|_{L^2(\O)}^2. \notag 
\end{align}

For convenience, we denote $\beta_2(\epsilon) = (1+\epsilon)(1-\rho\theta)$ and $\gamma_2 = \frac{\beta_1(\epsilon)}{C} > 0$. 
Next, we combine  \eqref{lem:conv2:est} and \eqref{thm:conv2:err-red:p3} to get 
\begin{align}\label{thm:conv2:err-red:p4}
&(1-\epsilon_1)|\|u-u_{H^{k+1}_K}\||_2^2 + \gamma_2 \e2^2(u_{H^{k+1}_K},\mathcal{T}_{k+1})\\
&\qquad\leq (1+\epsilon_1) |\|u-u_{H_K^k}\||_2^2 + \gamma_2 \beta_2(\epsilon) \e2^2(u_{H_K^k},\mathcal{T}_{k})\notag\\
&\qquad\qquad+ \big(\frac{C \gamma_2}{\beta_1(\epsilon)} - 1\big) \|u_{H^{k+1}_K}-u_{H_K^k}\|_{L^2(\Omega)}^2 + C \gamma_2 \| \uk - \up \|^2_{L^2(\O)} \notag\\
&\qquad\qquad + C(\epsilon_1) \| u - \uk \|^2_{L^2(\O)} + C(\epsilon_1) \| u - \uc \|^2_{L^2(\O)} \notag \\ 
&\qquad = \tilde{\zeta} \left[ (1-\epsilon_1) |\|u-u_{H_K^k}\||_2^2 + \gamma_2 \e2^2(u_{H_K^k},\mathcal{T}_{k}) \right] \notag \\
&\qquad\qquad + \left[ (1+\epsilon_1) - \tilde{\zeta} (1-\epsilon_1) \right] \| u - u_{H^{k}_K} \|_{L^2(\Omega)}^2 \notag \\
&\qquad\qquad + \gamma_2 (\beta_2(\epsilon) - \tilde{\zeta}) \e2^2(u_{H_K^k},\mathcal{T}_{k}) + C \gamma_2 \| \uk - \up \|^2_{L^2(\O)} \notag \\ 
&\qquad\qquad  + C(\epsilon_1) \| u - \uk \|^2_{L^2(\O)} + C(\epsilon_1) \| u - \uc \|^2_{L^2(\O)}, \notag
\end{align}
for some $\tilde{\zeta} > 0$ that will be specified later. Applying the reliability of the estimator in Lemma \ref{thm:re:reliable:m} yields
\begin{align}\label{thm:conv2:err-red:p5}
|\| u - \uk \||_2^2 \leq C_{Rel} \left( \e2^2(\uk, \T_k) + \| u - \up \|^2_{L^2(\O)} + \| u - \uk \|^2_{L^2(\O)} \right), 
\end{align}
where $C_{Rel}$ is the constant in Lemma \ref{thm:re:reliable:m}, 
and inserting this into the right-hand side of \eqref{thm:conv2:err-red:p4}, we thus have 
\begin{align}\label{thm:conv2:err-red:p6}
(1-\epsilon_1) &|\|u-u_{H^{k+1}_K}\||_2^2 + \gamma_2 \e2^2(u_{H^{k+1}_K},\mathcal{T}_{k+1}) \notag \\
&\leq \tilde{\zeta} \left[ (1-\epsilon_1) |\|u-u_{H_K^k}\||_2^2 + \gamma_2 \e2^2(u_{H_K^k},\mathcal{T}_{k}) \right] \\
&\quad + \big[ [(1+\epsilon_1) - \tilde{\zeta}(1-\epsilon_1)] C_{Rel} + \gamma_2 (\beta_2(\epsilon) - \tilde{\zeta}) \big] \e2^2(\uk, \T_k) \notag \\ 
&\quad + C \| u - \uk \|^2_{L^2(\O)} + C \| u - \up \|^2_{L^2(\O)} + C \| u - \uc \|^2_{L^2(\O)}. \notag  
\end{align}

Choosing $\epsilon$ small enough such that $0 < \beta_2(\epsilon) < 1$, we set 
\begin{align}\label{thm:conv2:err-red:p7}
\epsilon_1 &= \min \left\{ 1, \frac{\gamma_2(1-\beta_2(\epsilon))}{2 C_{Rel} } \right\}.
\end{align}
Furthermore, by choosing 
\begin{align}\label{thm:conv2:err-red:p8}
\tilde{\zeta} &= \frac{(1+\epsilon_1)C_{Rel} + \gamma_2 \beta_2(\epsilon)}{C_{Rel} (1-\epsilon_1) + \gamma_2}, 
\end{align}
we have $0 < \tilde{\zeta} < 1$ and 
the second term on the right-hand side of \eqref{thm:conv2:err-red:p6} vanishes. Finally, a re-scaling with the choice of $\tilde{\rho} = \frac{\gamma_2}{1-\epsilon_1}$ in \eqref{thm:conv2:err-red:p6} completes the proof.  
\end{proof}

The following convergence result is an immediate consequence of Theorem \ref{thm:conv2:err-red}. It shows that the error decreases to zero up to higher-order terms as meshes bisect. 
\begin{theorem} (Convergence)\label{thm:conv4.1}
Assume the bulk criterion \eqref{20150623_7_add} holds. With the choices of $\tilde{\zeta} \in (0, 1)$ and $\tilde{\rho} > 0$ in Theorem~\ref{thm:conv2:err-red},  
there holds 
\begin{align}\label{thm:conv4.1:est}
&|\|u-u_{H^{k+1}_K}\||_2^2 + \tilde{\rho} \e2^2(u_{H^{k+1}_K},\mathcal{T}_{k+1})\\
&\qquad\leq\tilde{\zeta}^{k+1}\bigl(|\|u-u_{H_K^0}\||_2^2+ \tilde{\rho} \e2^2(u_{H_K^0},\mathcal{T}_0)\bigr) \notag \\
&\qquad\quad+C \sum_{i=0}^{k}\|u-u_{H_K^{k+1-i}}\|_{L^2(\Omega)}^2\tilde{\zeta}^i + C \sum_{i=0}^{k}\|u-u_{H_K^{k-i}}\|_{L^2(\Omega)}^2\tilde{\zeta}^i\notag \\
&\qquad\quad + C \sum_{i=0}^{k-1}\|u-u_{H_K^{k-1-i}}\|_{L^2(\Omega)}^2\tilde{\zeta}^i. \notag
\end{align}
\end{theorem}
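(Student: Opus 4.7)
The plan is to derive Theorem \ref{thm:conv4.1} by iterating the one-step contraction estimate of Theorem \ref{thm:conv2:err-red}. To keep indices manageable I would introduce the shorthand
\[
Q_j := |\| u - u_{H_K^j} \||_2^2 + \tilde{\rho}\,\tilde{\eta}^2(u_{H_K^j}, \mathcal{T}_j), \qquad L_j := \|u - u_{H_K^j}\|_{L^2(\Omega)}^2,
\]
so that Theorem \ref{thm:conv2:err-red} reads
\[
Q_{j+1} \leq \tilde{\zeta}\, Q_j + C\bigl( L_{j+1} + L_j + L_{j-1} \bigr) \qquad \text{for all } j \geq 0,
\]
with the convention $L_{-1} := 0$ (there is no $u_{H_K^{-1}}$-term produced for $j=0$, as the error reduction at the very first step only involves $L_1$ and $L_0$).

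Next I would prove by induction on $k$ that
\[
Q_{k+1} \leq \tilde{\zeta}^{k+1} Q_0 + C \sum_{j=0}^{k} \tilde{\zeta}^{\,k-j}\bigl( L_{j+1} + L_j + L_{j-1}\bigr).
\]
The base case $k=0$ is exactly the statement of Theorem \ref{thm:conv2:err-red}. For the inductive step, apply the one-step bound to $Q_{k+1}$ and use the induction hypothesis on $Q_k$; the factor $\tilde{\zeta}$ in front of $Q_k$ multiplies through each $\tilde{\zeta}^{\,k-1-j}$, producing the desired $\tilde{\zeta}^{\,k-j}$ with the new boundary term $L_{k+1}+L_k+L_{k-1}$ appearing with weight $\tilde{\zeta}^0 = 1$.

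Finally, I would reindex the three kinds of $L$-terms to match the way the statement is written. Setting $i = k-j$ in $\sum_{j=0}^{k} \tilde{\zeta}^{\,k-j} L_{j+1}$ gives $\sum_{i=0}^{k} \tilde{\zeta}^{i} L_{k+1-i}$, which is the first sum in \eqref{thm:conv4.1:est}; the same substitution yields the second sum from $\sum_{j=0}^{k} \tilde{\zeta}^{\,k-j} L_j$, and the third sum from $\sum_{j=0}^{k} \tilde{\zeta}^{\,k-j} L_{j-1} = \sum_{i=0}^{k-1} \tilde{\zeta}^{i} L_{k-1-i}$ (the $j=0$ contribution vanishes by our convention, explaining why this last sum stops at $k-1$). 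Absorbing the resulting constant factors into a single $C$ yields the estimate \eqref{thm:conv4.1:est}.

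There is no real obstacle here beyond index bookkeeping; the content of the theorem is a telescoping of a geometric-contraction recurrence with an inhomogeneous source. The only point requiring mild care is the treatment of the $L_{-1}$ boundary term, which is why the third sum in the conclusion has an upper limit of $k-1$ rather than $k$. The convergence interpretation (that the $\tilde{\zeta}^i$ weighted sums of $L^2$ errors are genuinely higher-order on uniform or adaptive meshes) is not part of the formal statement and does not need to be argued here.
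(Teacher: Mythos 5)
Your proposal is correct and is exactly the argument the paper intends: the theorem is stated as an ``immediate consequence'' of Theorem~\ref{thm:conv2:err-red}, obtained by iterating the one-step contraction and reindexing the geometric sums, precisely as you do. The only cosmetic difference is your convention $L_{-1}=0$ at the first step, whereas the paper's convention $u_{H_K^{-1}}=u_{H_K^{0}}$ would give $L_{-1}=L_0$; either way the extra $\tilde{\zeta}^{k}L_0$ term is absorbed into the second sum, so the stated bound follows.
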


\begin{remark}\label{rem:addtest}
The last three terms on the right-hand side of \eqref{thm:conv4.1:est} appear to be higher-order terms on adaptive refinements (See Section \ref{sec-6}). 
In particular, we observe terms like $\| u - u_{H^k_K} \|_{L^2(\Omega)} \tilde{\zeta}^0$ and 
$\| u - u_{H^0_K} \|_{L^2(\Omega)} \tilde{\zeta}^k$, where the first term is considered to be small on the $k$-th mesh and the second term depends on the $L^2$ error on the initial mesh. Therefore, a sufficiently small initial mesh will be needed in order to achieve a more accurate approximation on the $k$-th mesh. 
\end{remark}

\subsection{General nonlinear PDEs}
The Newton method is employed in this section to solve the general nonlinear PDEs. One step of Newton iteration is used in Algorithm \ref{algo5}. 
\begin{algorithm}[H]
\caption{The ATG finite element algorithm with one-step Newton correction for general nonlinear problems}
\label{algo5}
STEP 1: Find $u_{H_K^0}\in \mathcal{V}_{H_K^0}$ such that
\begin{align*}
A(u_{H_K^0},v_{H_K^0}) = 0\qquad\forall v_{H_K^0}\in \mathcal{V}_{H_K^0};
\end{align*}
STEP 2: For $k \geq 0$, find $u_{H_K^{k+1}}\in \mathcal{V}_{H_K^{k+1}}$ such that
\begin{align*}
A_2(u_{H_K^k},u_{H_K^{k+1}},v_{H_K^{k+1}}) = A_2(u_{H_K^k},u_{H_K^k},v_{H_K^{k+1}})-A(u_{H_K^k},v_{H_K^{k+1}})\quad\forall v_{H_K^{k+1}}\in \mathcal{V}_{H_K^{k+1}}.
\end{align*}
\end{algorithm}

Corresponding to Algorithm \ref{algo5}, two steps of Newton iteration are used in the Algorithm \ref{algo7}.
\begin{algorithm}[H]
\caption{The ATG finite element algorithm with two-step Newton corrections for general nonlinear problems}
\label{algo7}
STEP 1: Find $u_{H_K^0}\in \mathcal{V}_{H_K^0}$ such that
\begin{align*}
A(u_{H_K^0},v_{H_K^0}) = 0\qquad\forall v_{H_K^0}\in \mathcal{V}_{H_K^0};
\end{align*}
STEP 2: For $k \geq 0$, find $u_{H_K^{k+1}}\in \mathcal{V}_{H_K^{k+1}}$ such that
\begin{align*}
A_2(\tilde{u}_{H_K^{k+1}},u_{H_K^{k+1}},v_{H_K^{k+1}}) =& A_2(\tilde{u}_{H_K^{k+1}},\tilde{u}_{H_K^{k+1}},v_{H_K^{k+1}})\\
&-A(\tilde{u}_{H_K^{k+1}},v_{H_K^{k+1}})\qquad\forall v_{H_K^{k+1}}\in \mathcal{V}_{H_K^{k+1}}, 
\end{align*}
where $\tilde{u}_{H_K^{k+1}}\in \mathcal{V}_{H_K^{k+1}}$ satisfies 
\begin{align*}
A_2(u_{H_K^{k}},\tilde{u}_{H_K^{k+1}},v_{H_K^{k+1}}) = A_2(u_{H_K^{k}},u_{H_K^{k}},v_{H_K^{k+1}})-A(u_{H_K^{k}},v_{H_K^{k+1}})\quad\forall v_{H_K^{k+1}}\in \mathcal{V}_{H_K^{k+1}}. 
\end{align*}
\end{algorithm}

\section{Numerical experiment}\label{sec-6}
In this section, some numerical experiments will be given to test the proposed  algorithms with linear finite elements. To present the numerical results clearly, we explain some of the proposed algorithms. Consider Figure \ref{fig1} as an example: the left graph is the the initial mesh and the right one is the mesh after one step of bisection.
\begin{figure}[H]
\centering
\includegraphics[height=1.6in,width=1.7in]{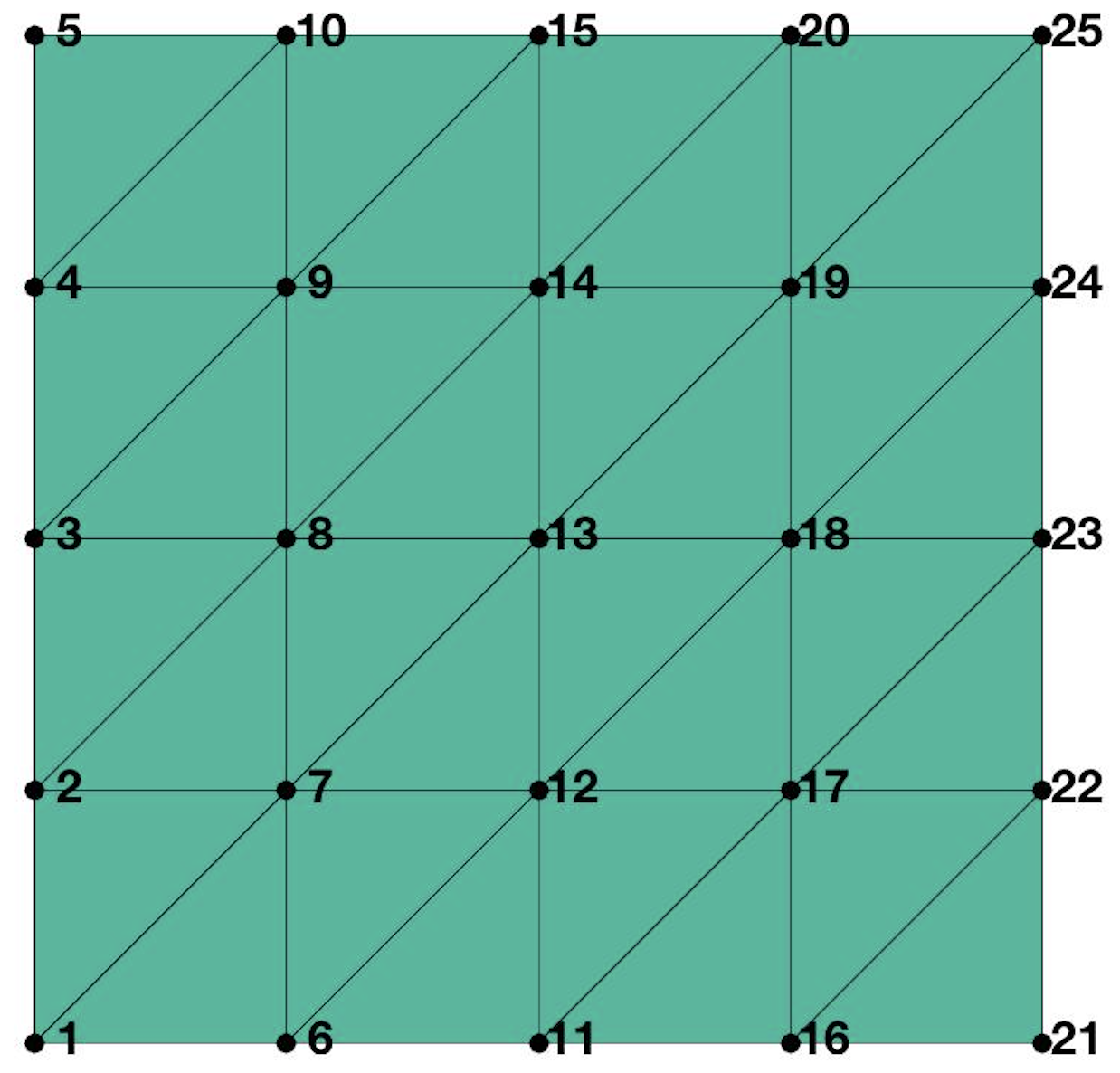}
\includegraphics[height=1.59in,width=1.7in]{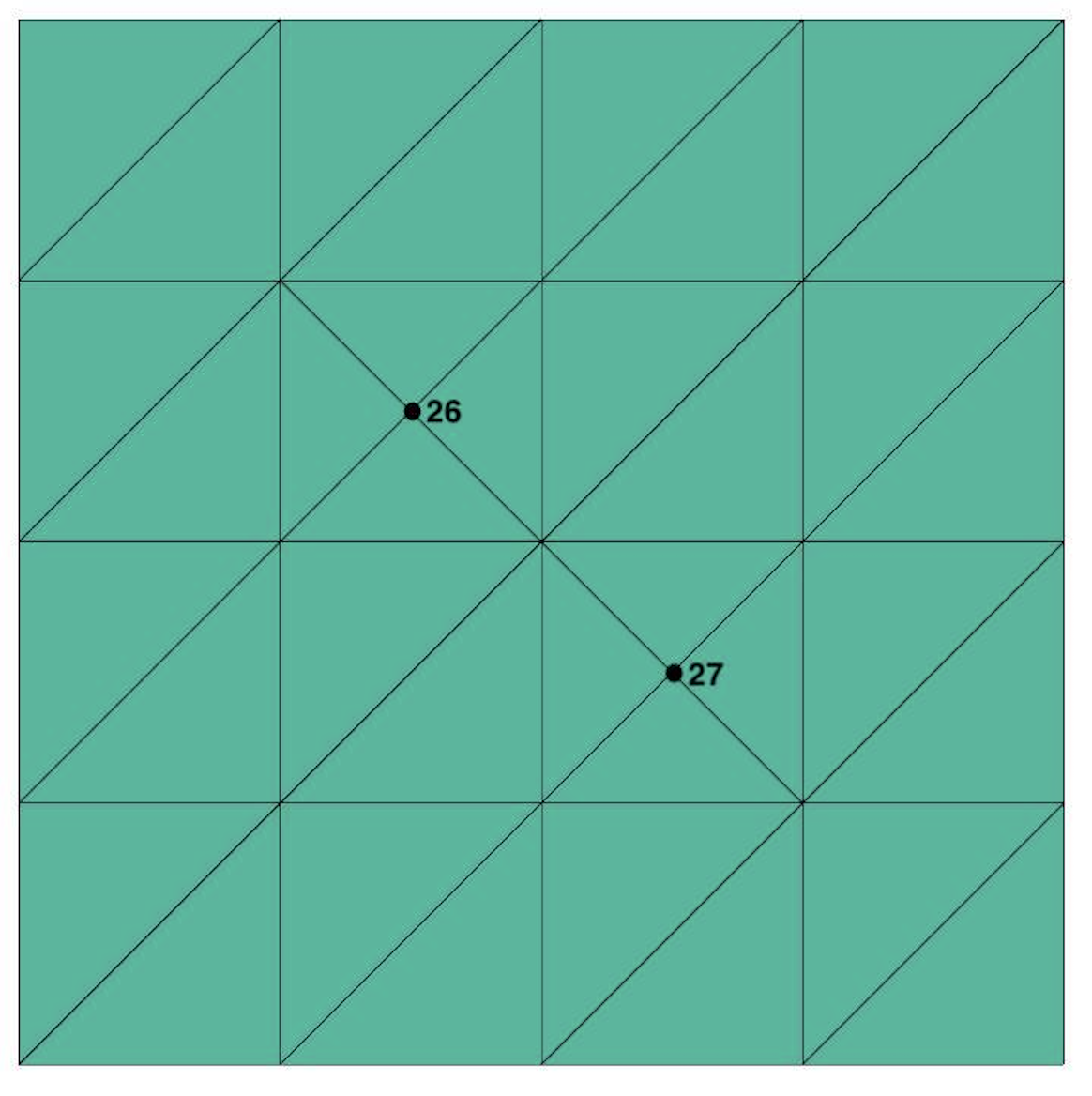}
\caption{The initial mesh (left) and the mesh after one step of bisection (right).}\label{fig1}
\end{figure}

The following steps help to explain the implementation of the algorithms \ref{algo3}-\ref{algo7}. We consider the first bisection, and the next bisections follow the similar way. 
\begin{enumerate}
\item In Step 1 of the Algorithms \ref{algo3}-\ref{algo7}, assume the numerical solution on the initial meshes (left in Figure \ref{fig1}) has been obtained, i.e., $[u_1^0\quad u_2^0\quad\cdots\quad u_{25}^0]^T$. Denote the numerical solution after the bisection as $[u_1^1\quad u_2^1\quad\cdots\quad u_{27}^1]^T$. The error estimator is computed to determine how to refine or coarsen meshes;
\item In Step 2 of the Algorithms \ref{algo3}-\ref{algo7}, mesh points $26$ and $27$ are added, and $u_{26}^0=\frac{u_9^0+u_{13}^0}{2}$ and $u_{27}^0=\frac{u_{13}^0+u_{17}^0}{2}$ are used as the coefficient of the Newton iterations. This is very efficient since only the newly-added mesh points need to be interpolated. Then $[u_1^1\quad u_2^1\quad\cdots\quad u_{27}^1]^T$ are computed based on $[u_1^0\quad u_2^0\quad\cdots\quad u_{27}^0]^T$.
\end{enumerate}

In Test 1, we compare the Algorithm \ref{algo3} to the regular adaptive method (see Algorithm \ref{algo3:regular}) as well as the two-grid algorithm \cite{xu1996two}. Besides, the last three terms on the right-hand side of \eqref{thm:conv4.1:est} are verified to be very small ($10^{-3}$ magnitude) compared to $H^1$ error ($10^{-1}$ magnitude) as tested below (see Remark \ref{rem:addtest}). In Test 2, Algorithms \ref{algo3} and \ref{algo5} are compared based on the convergence rates and higher-order terms. In Test 3, Algorithms \ref{algo3} and \ref{algo3:regular}) are compared for a semi-linear PDE with a $tanh$ profile solution.

\paragraph{Test 1} In this test, we choose the domain to be $\Omega=[-1,1]\times[-1,1]$ and the exact solution to be $u=\sin\pi x\sin\pi y$, and consider the following semi-linear PDEs (see \cite{feng2014analysis, xu2016convex} for more numerical examples of semi-linear PDEs)
\begin{alignat}{2}
-\Delta u + u^5 &= 2\pi^2\sin\pi x\sin\pi y+\sin^5\pi x\sin^5\pi y\qquad &&\text{in}\ \Omega,\label{eq20190528_1}\\
u &= 0\qquad &&\text{on}\ \partial\Omega\label{eq20190528_2}.
\end{alignat}

We compare Algorithm \ref{algo3} to the regular adaptive finite element algorithm (see Algorithm \ref{algo3:regular}), where we solve $\uc \in \mathcal{V}_{H^{k+1}}$ from the original nonlinear equation. Notice that the error estimator in Algorithm \ref{algo3} is generated from 
\begin{align*}
\R2K^k &=  (\up)^5-2\pi^2\sin\pi x\sin\pi y-\sin^5\pi x\sin^5\pi y \qquad \forall \, K \in \T_k, \\
\J2E^k &= \[\nabla \uk \]_E \qquad \forall \, E \in \E^i_k,
\end{align*}
where $\uk \in \T_k$ is the solution to Algorithm \ref{algo3}. For the regular adaptive finite element algorithm, the following element residual and the edge jump are used 
\begin{align*}
\R2K^k &=  (\uk)^5-2\pi^2\sin\pi x\sin\pi y-\sin^5\pi x\sin^5\pi y \qquad \forall \, K \in \T_k, \\
\J2E^k &= \[\nabla \uk \]_E \qquad \forall \, E \in \E^i_k, 
\end{align*}
where $\uk \in \T_k$ is the solution to Algorithm \ref{algo3:regular} below. 
\begin{algorithm}[H]
\caption{Regular adaptive finite element algorithm for mildly nonlinear problems}
\label{algo3:regular}
STEP 1: Find $u_{H_K^0}\in \mathcal{V}_{H_K^0}$ such that
\begin{align*}
A(u_{H_K^0},v_{H_K^0}) = 0\qquad\forall v_{H_K^0}\in \mathcal{V}_{H_K^0};
\end{align*}
STEP 2: For $k\ge0$, find $u_{H_K^{k+1}}\in \mathcal{V}_{H_K^{k+1}}$ such that
\begin{align*}
A(u_{H_K^{k+1}},v_{H_K^{k+1}}) = 0\qquad\forall v_{H_K^{k+1}}\in \mathcal{V}_{H_K^{k+1}}.
\end{align*}
\end{algorithm}


Figure \ref{fig_add2} depicts the convergence histories of the $H^1$ semi-norm error as well as the error estimator for Algorithm \ref{algo3} (top) and the regular adaptive finite element algorithm \ref{algo3:regular} (bottom). We observe the optimal convergence for both algorithms. However, Algorithm \ref{algo3} is more efficient in terms of computational cost since only a linear equation is solved except on the initial mesh. More examples can be found in the extended version of this paper in \cite{li2019analysis}.
\begin{figure}[H]
\centering
\includegraphics[height=1.4in,width=4.0in]{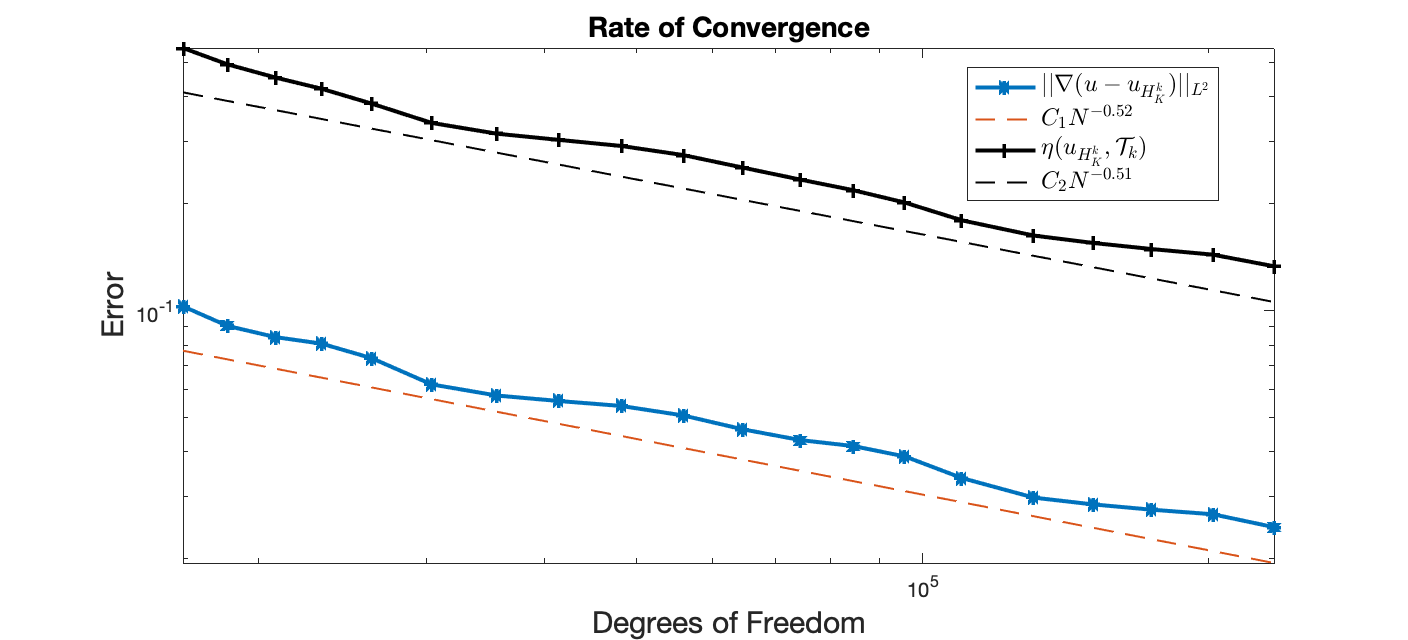}

\includegraphics[height=1.4in,width=4.0in]{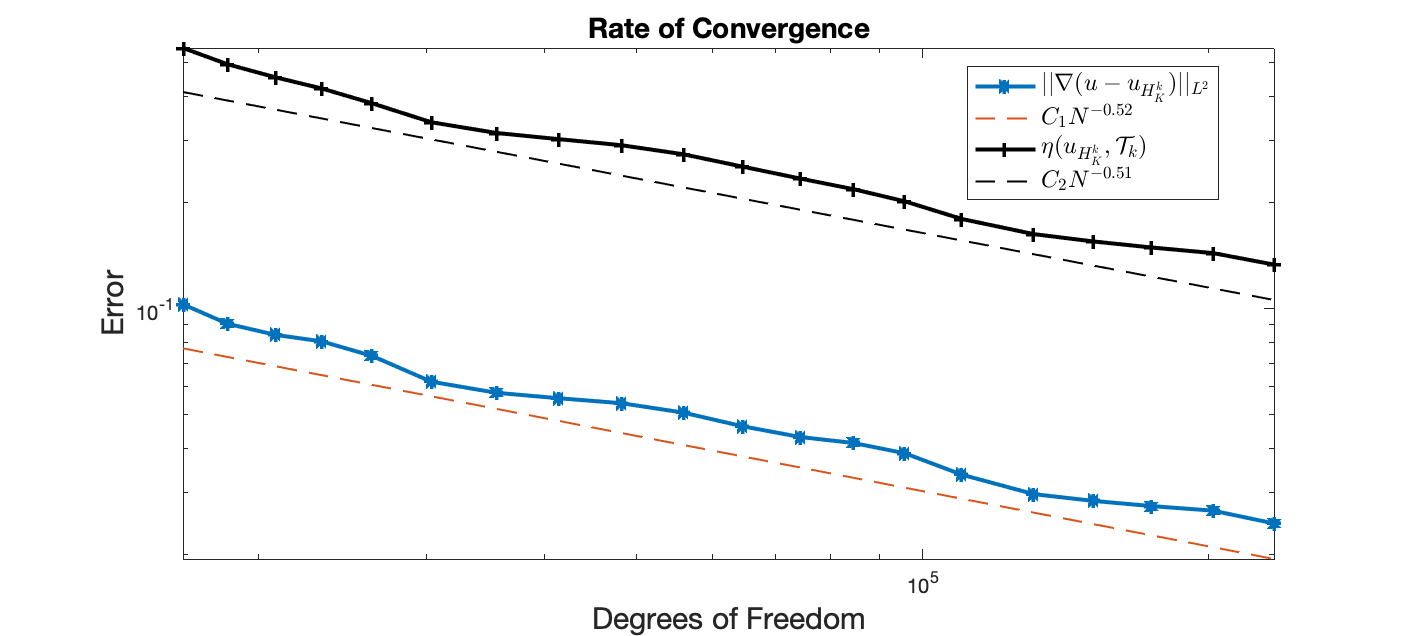}
\caption{The top is the rate of convergence for Algorithm \ref{algo3}, and the bottom is the rate of convergence for Algorithm \ref{algo3:regular}.}\label{fig_add2}
\end{figure}

For the comparison between Algorithm \ref{algo3} and two-grid algorithm \cite{xu1996two}, we assume the smallest mesh sizes are the same. Then after $20$ bisections, the degrees of freedom on the uniform mesh are more than $1.5\times10^{10}$. However, the degrees of freedom of algorithms \ref{algo3} are less than $2.4\times10^5$ as we observed, which is much more efficient.

Moreover, we numerically check higher-order terms on the right-hand side of \eqref{thm:conv4.1:est}. In Table \ref{tab1}, we observe that these terms are very small compared to the $H^1$ error, and they approach zero as bisection step $k$ or the degrees of freedom increase.
\begin{table}[!htbp]
\begin{center}
\centering
\begin{tabular}{|l|c|c|c|c|c|c|c|c|c|c|c|c|c|c|}
\hline
k & 1 & 2 & 3 & 4 & 5 & 6 & 7 & 8 & 9 & 10  \\ \hline
h.o.t. ($10^{-4}$) & $2.86$  & $2.18$& $2.13$& $1.85$& $1.54$& $1.24$& $0.99$& $0.78$& $0.62$& $0.48$\\ \hline
\end{tabular}
\caption{The higher-order terms decrease as $k$ increases.} 
\label{tab1} 
\end{center}
\end{table}

\paragraph{Test 2} In this test, we choose the domain to be $\Omega=[-1,1]\times[-1,1]$ and the exact solution to be $u=\sin\pi x\sin\pi y$, and consider mildly nonlinear PDEs below
\begin{alignat}{2}
-\mathrm{div}((2-u)\nabla u)  &= f\qquad &&\text{in}\ \Omega,\label{eq20190618_1}\\
u &= 0\qquad &&\text{on}\ \partial\Omega\label{eq20190618_2},
\end{alignat}
where $f=4\pi^2\sin\pi x\sin\pi y+\pi^2\sin^2\pi y\cos2\pi x+\pi^2\sin^2\pi x\cos2\pi y$.

Figure \ref{fig_add4} depicts the convergence histories of the $H^1$ semi-norm as well as the error estimators for Algorithm \ref{algo3} (top) and Algorithm \ref{algo5} (bottom). We observe the optimal convergence for both algorithms. More examples can be found in the extended version of this paper in \cite{li2019analysis}.
\begin{figure}[H]
\centering
\includegraphics[height=1.4in,width=4.0in]{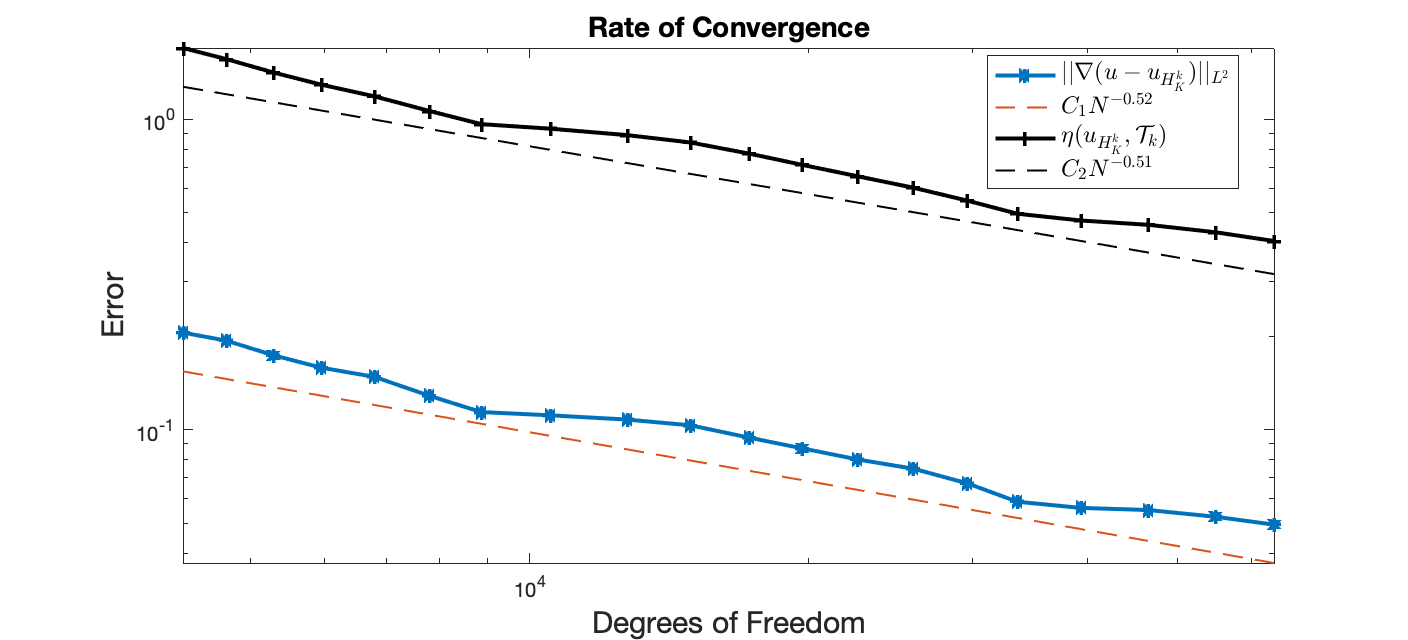}

\includegraphics[height=1.4in,width=4.0in]{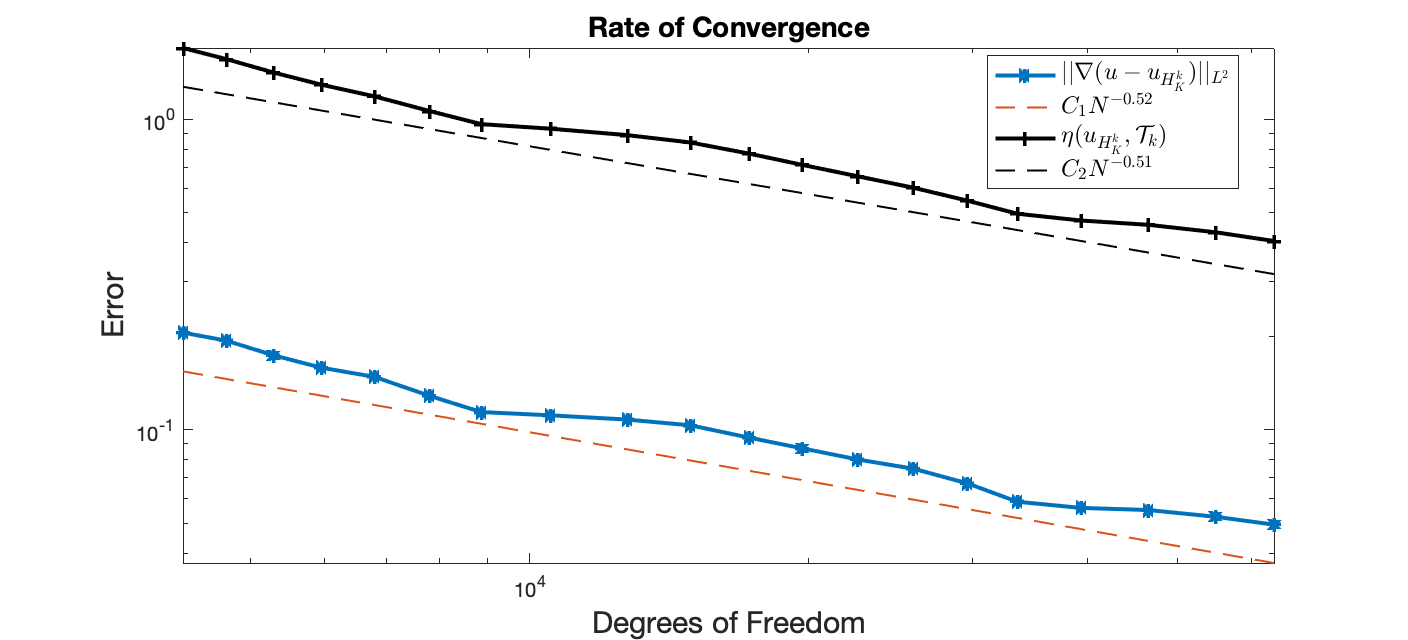}
\caption{The top is the rate of convergence for Algorithm \ref{algo3}, and the bottom is the rate of convergence for Algorithm \ref{algo5}.}\label{fig_add4}
\end{figure}

In Table \ref{tab3}, we also observe that the higher-order terms on the right-hand side of \eqref{thm:conv4.1:est} are very small compared to the $H^1$ error. They will converge to 0 as bisection step $k$ or the degrees of freedom increase.
\begin{table}[!htbp]
\begin{center}
\centering
\begin{tabular}{|l|c|c|c|c|c|c|c|c|c|c|c|c|c|c|}
\hline
k & 1 & 2 & 3 & 4 & 5 & 6 & 7 & 8 & 9 & 10  \\ \hline
h.o.t.1 ($10^{-3}$) & $1.38$  & $1.02$& $1.00$& $0.94$& $0.79$& $0.64$& $0.52$& $0.42$& $0.33$& $0.27$\\ \hline
h.o.t.2 ($10^{-3}$) & $1.59$  & $0.97$& $0.95$& $0.90$& $0.75$& $0.61$& $0.50$& $0.40$& $0.31$& $0.25$\\ \hline
\end{tabular}
\caption{The higher-order terms decrease as $k$ increases.} 
\label{tab3} 
\end{center}
\end{table}
Here h.o.t.1 and h.o.t.2 represent the higher-order terms of Algorithm \ref{algo3} and Algorithm \ref{algo5}, respectively.

\paragraph{Test 3} Consider the following nonlinear PDE with Dirichlet boundary condition:
\begin{alignat}{2}
u-k\Delta u + \frac{k}{\epsilon^2}(u^3-u) &= \tanh\bigl(\frac{d({\bf x})}{\sqrt{2}\epsilon}\bigr)-k\frac{1-\tanh^2\bigl(\frac{d({\bf x})}{\sqrt{2}\epsilon}\bigr)}{\sqrt{2(x^2+y^2)}\epsilon}\qquad &&\text{in}\ \Omega,\label{eq20180323_1}\\
u &= 1\qquad &&\text{on}\ \partial\Omega.\label{eq20180323_2}
\end{alignat}
where $d({\bf x})$ denotes the distance function from point ${\bf x}$ to the circle $x^2+y^2=0.3^2$, and domain $\Omega=[-1,1]\times[-1,1]$. The exact solution is $\tanh\bigl(\frac{d({\bf x})}{\sqrt{2}\epsilon}\bigr)$, and the boundary value is 1 due to the machine error. Here we choose $\epsilon=0.05$ and $k=\epsilon^2$. We remark that when $k$ is small, solving equation \eqref{eq20180323_1} is equivalent solving one time step Allen-Cahn equation with initial condition $\tanh\bigl(\frac{d({\bf x})}{\sqrt{2}\epsilon}\bigr)$ and time step $k$ \cite{feng2014analysis, xu2016convex}.


Figure \ref{fig_add3} depicts the convergence histories of the $H^1$ semi-norm as well as the error estimator for Algorithm \ref{algo5} (top) and the regular adaptive algorithm \ref{algo3:regular}. We observe the optimal convergence for both algorithms. Similar to Test 1, Algorithm \ref{algo3} is more efficient since only a linear equation is solved except on the initial mesh. 
\begin{figure}[H]
\centering
\includegraphics[height=1.6in,width=4.0in]{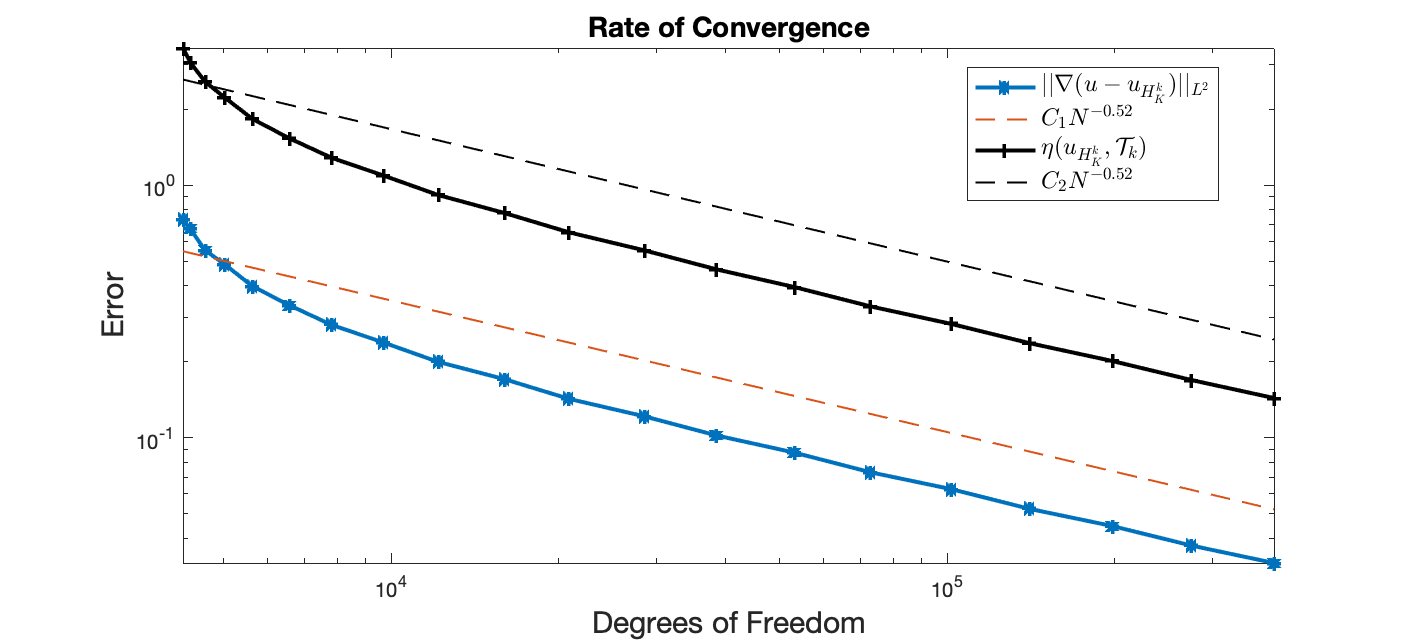}

\includegraphics[height=1.6in,width=4.0in]{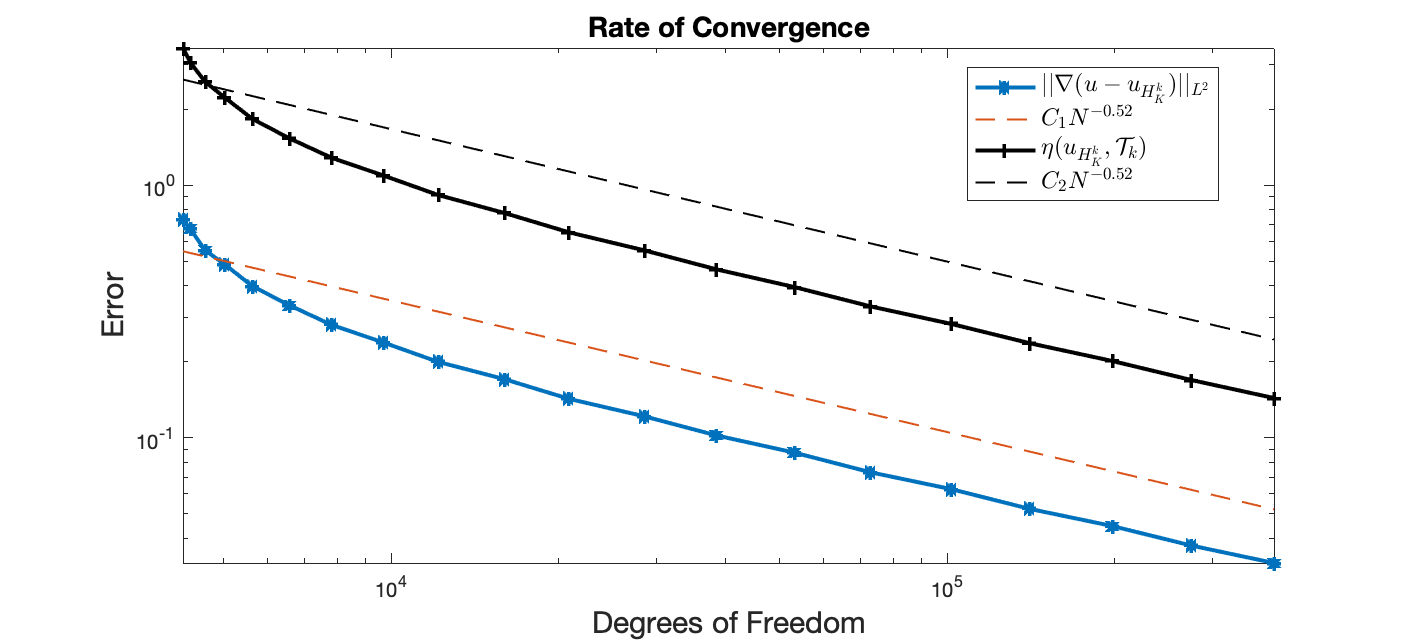}
\caption{The top is the rate of convergence for Algorithm \ref{algo5}, and the bottom is the rate of convergence for the regular adaptive algorithm \ref{algo3:regular}.}\label{fig_add3}
\end{figure}

For the comparison between Algorithm \ref{algo5} and classical two-grid algorithm \cite{xu1996two}, we assume the smallest mesh sizes to be the same. Then after $20$ bisections, the degree of freedom on the uniform meshes is more than $1.5\times10^{10}$. However, the degree of freedom of algorithms \ref{algo5} is less than $4\times10^5$ as we observed, which is much more efficient.

We also numerically check higher order terms (h.o.t.) on the right-hand side of \eqref{thm:conv4.1:est}. In Table \ref{tab2}, we verify that these terms are very small compared to $H^1$ error, and they approach 0 as bisection step k or the degree of freedom (d.o.f.) increases.
\begin{table}[!htbp]
\begin{center}
\centering
\begin{tabular}{|l|c|c|c|c|c|c|c|c|c|c|c|c|c|c|}
\hline
k & 1 & 2 & 3 & 4 & 5 & 6 & 7 & 8 & 9 & 10  \\ \hline
h.o.t. ($10^{-3}$) & $4.89$  & $3.97$& $3.88$& $3.31$& $2.68$& $2.10$& $1.63$& $1.25$& $0.95$& $0.72$\\ \hline
\end{tabular}
\smallskip
\caption{The higher order term decreases as $k$ increases.} 
\label{tab2} 
\end{center}
\end{table}

\section*{Acknowledgements}
The author highly thanks Dr. Jinchao Xu in Pennsylvania State University for his motivation and ideas in this paper, and Dr. Shuonan Wu in Peking University for his useful suggestions during the preparation of this manuscript.

\end{document}